\documentclass[11pt]{amsart}

\usepackage{subfigure}
\usepackage{graphicx}

\newtheorem{thm}{Theorem}[section]
\newtheorem{prop}[thm]{Proposition}
\newtheorem{lem}[thm]{Lemma}
\newtheorem{cor}[thm]{Corollary}

\newtheorem{rem}[thm]{Remark}

\theoremstyle{definition}
\newtheorem{definition}[thm]{Definition}
\newtheorem{example}[thm]{Example}

\newcommand\R{\mathbb R}
\newcommand\weakto\rightharpoonup
\newcommand\Lip{\mathrm{Lip}}

\begin{document}

\title[Material laws from boundary data]
{A well-posed variational approach to the identification and convergent 
approximation of material laws from boundary data} 

\author{S.~Conti${}^{1}$ and M.~Ortiz${}^{2,3}$}

\address
{
    ${}^1$Institut f{\"u}r Angewandte Mathematik, Universit\"at Bonn, 
    Endenicher Allee 60, 53115 Bonn, Germany 
    \\
    ${}^2$Division of Engineering and Applied Science, California Institute 
    of Technology, 1200 E.~California Blvd., Pasadena, CA 91125, USA\\ 
    ${}^3$Institut f{\"u}r Angewandte Mathematik and Hausdorff Center for 
    Mathematics, Universit\"at Bonn, Endenicher Allee 60, 53115 Bonn, 
    Germany
}

\email{ortiz@caltech.edu}

\begin{abstract}
We formulate the problem of material identification as a problem of optimal control in which the deformation of the specimen is the state variable and the unknown material law is the control variable. We assume that the material obeys finite elasticity and that the deformation of the specimen is in static equilibrium with prescribed boundary displacements. We further assume that the attendant total energy of the specimen can be measured, e.~g., with the aid of the work-energy identity. In particular, no full-field measurements, such as DIC, are required. The cost function measures the maximum discrepancy between the total elastic energy corresponding to a trial material law and the measured total elastic energy over a range of prescribed boundary displacements. The question of material identifiability is thus reduced to the question of existence and uniqueness of controls. We propose a specific functional framework, prove existence of optimal controls and show that the question of material identifiability hinges on the separating properties of the boundary data. The proposed framework naturally suggests and supports approximation by {\sl maxout} neural networks, i.~e., neural networks of piecewise affine or polyaffine functions and a maximum, or {\sl join}, activation function. We show that maxout neural networks have the requisite density property in the space of energy densities and result in convergent approximations as the number of neurons increases to infinity. Simple examples are also presented that illustrate the minimax structure of the identification problem.  
\end{abstract}

\maketitle

\section{Introduction}

The availability of big material data sets, made possible by epochal advances 
in experimental and computational science (see, e.~g., \cite{Buljac:2018, 
Schleder:2019, Bernier:2020, Wang:2024}), has given rise to a desire to forge 
a closer nexus between material data and the predictions they enable. Two 
main paradigms have emerged, loosely corresponding to supervised and 
unsupervised methods in the context of machine learning: Model-free 
approaches, in which material set data is combined directly with field 
equations to effect predictions of quantities of interest 
\cite{Kirchdoerfer:2016, Kirchdoerfer:2017, Leygue:2018, conti:2018, 
Stainier:2019, Eggersmann:2019, Conti:2020, Carrara:2020, Prume:2023}; and 
model-based approaches, in which the connection between material data and 
predictions is effected through the intermediate step of identifying a 
material law from the data \cite{Liu:2022, Bhattacharya:2023, Liu:2023, 
Weinberg:2023, Asad:2023, Marino:2023, Ghane:2024, Akerson:2024}.  

The present work focuses on the second paradigm, specifically, on the problem 
of identifying material laws from experimental data. It has been long 
recognized that material identification from empirical data may be regarded 
as an inverse problem (see, e.~g., the pioneering work of Bui \cite{Bui:1993} 
including non-destructive detection, the characterization of internal defects 
and inhomogeneities; the identification of singularities in fracture 
mechanics; and the identification of the physical parameters of materials). 
For the most part, the classical work is concerned mainly with the 
identification of parameters in a given class of models, in contrast to the 
more challenging problem of identifying the functional form of the model 
itself (see, e.~g., \cite{Martins:2018} and references therein). Recent 
advances in experimental techniques, including the ability to perform 
spatially and time-resolved measurements \cite{Sutton:2009,Buljac:2018}, and 
high-throughput experiments \cite{Li:2019, Jin:2022} have provided renewed 
impetus to the subject. In addition, neural networks and machine learning 
have supplied a new and efficient means of representing material laws and 
fitting them by regression to big data sets, causing an extensive 
reevaluation of the field.  

These advances notwithstanding, the choice of cost/loss function used for 
purposes of identification and data regression is often {\sl ad hoc} and does 
not ensure existence of solutions. Additionally, a suitable notion of 
convergence of approximations if often lacking, and no guarantee is offered 
that the approximate solutions obtained, e.~g., through a neural-network 
representation, converge in any suitable sense to the underlying--and 
unknown--material law.  

The present work is concerned with the formulation of a well-posed 
variational framework enabling the identification and convergent 
approximation of material laws from boundary data. By {\sl well-posed} we 
mean that the variational problem has solutions under reasonable assumptions 
on the data, and that the solutions indeed identify the underlying--and 
unknown--material law. By {\sl convergent} approximation we mean that the 
variational framework generates provably convergent approximate solutions, 
provided that the spaces of trial material laws are suitably chosen and are 
dense in the space of energy densities. Other properties of the variational 
property, such as ease of implementation and computational efficiency, are 
also desirable, but are not addressed in the present work (interested readers 
are referred to \cite{Conti:2024}). 

We specifically address these questions within the framework of finite 
elasticity, where the objective is to identify the strain-energy density of 
the material. We recall that finite elasticity challenges material 
identification because it allows for local and global instabilities, such as 
internal buckling and cavitation \cite{Ball:1982, Ball:1984}. For 
definiteness, we assume that the experimental setup enables displacement 
control over the entire boundary of the specimen and results in a measurement 
of the total energy of specimen, e.~g., by integrating the work-energy 
identity along convenient loading paths. It bears emphasis that in this 
scenario no full-field measurements, such as obtained from digital image 
correlation (DIC), are assumed or required. Instead, the goal is to achieve 
material identification solely from boundary data. We also emphasize that the 
boundary data is not restricted to--and is not required to include--affine 
boundary displacements.  

We note that the assumption of displacement control is necessitated in 
practice by the requirement of existence of equilibrium solutions in finite 
elasticity \cite{Ball:1976}. The situation is far less rigid if only small 
strains are allowed for or anticipated. In this case, an appeal to standard 
duality arguments and to devices such as the Dirichlet-to-Neumann map enable 
consideration of mixed traction/displacement boundary conditions. By 
contrast, extensions to mixed boundary conditions in the presence of large 
deformations are not covered by present theory and remain open 
\cite{Ball:2002}. 

We show that these questions can be addressed systematically, both from a 
theoretical as well as a computational perspective, by formulating the 
problem of material identification as a problem of optimal control 
\cite{Belloni:1993, Bucur:2005} in which the deformation of the specimen is 
the state variable and the unknown material law is the control variable. We 
assume that the deformation is in static equilibrium with prescribed 
displacements imparted by a loading device. The cost function then measures 
the discrepancy between the elastic energy corresponding to a trial material 
law and the measured elastic energy for a sample of loading conditions. The 
question of identifiability of the material law is thus reduced to the 
question of existence and uniqueness of controls. 

We propose a specific functional framework combining: a bounded set of 
Lipschitz-continuous displacements, setting forth {\sl a priori} bounds on 
the deformation gradients that can be attained by the specimen; and a bounded 
set of Lipschitz-continuous energy densities, setting forth {\sl a priori} 
bounds on the attainable stresses. We endow these spaces with weak and strong 
uniform convergence topologies. We then postulate a specific class of {\sl 
minimax} cost functions and prove that they ensure existence of solutions in 
the form of optimal controls. We additionally show that the question of 
identifiability of the material law hinges on the separating properties of 
the boundary data.  

We additionally show that the proposed framework, and specifically the 
lattice structure of the space of controls, naturally supports approximation 
by {\sl maxout} neural networks \cite{Anil:2019, Goodfellow:2013}, i.~e., 
neural networks with affine or polyaffine neurons and a maximum, or join, 
activation function. We show that {\sl maxout} neural networks have the 
requisite density property in the space of controls, or the universal 
approximation property in machine-language {\sl newspeak}, and result in 
convergent approximations as the number of neurons increases to infinity. 
Simple examples are also presented that illustrate the minimax structure of 
the identification problem.  

\section{Problem formulation}

The general optimal control problem takes the form \cite{Belloni:1993, 
Bucur:2005} 
\begin{equation}\label{eq:OC:P1}
    \inf \{ J(u,y) : u \in U, \; y\in {\rm argmin} \, G(u,\cdot) \}
\end{equation}
where $y\in Y$ is the {\sl state variable} and $Y$ is the {\sl state space}, 
$u\in U$ is the {\sl control variable} and $U$ is the {\sl control set}, $J : 
U\times Y \to \overline{\mathbb{R}}$ is the {\sl cost functional} and $G : 
U\times Y \to \overline{\mathbb{R}}$ is the {\sl state functional}. 

In applying the general optimal control framework to material identification, 
we make the following choices and assumptions: 
\begin{itemize}
\item[A1)] State space. We restrict to deformations $y$ of the solid that 
belong to a closed and bounded subset $Y$ of $W^{1,\infty}(\Omega; 
\mathbb{R}^n)$, with $\Omega \subset \mathbb{R}^n$ open, bounded, connected, 
Lipschitz. Specifically, we let $Y$ be the closed and bounded subset of 
$W^{1,\infty}(\Omega;\mathbb{R}^n)$ defined by 
\begin{equation}\label{eqboundsY}
\begin{split}
    \left|\int_\Omega y(x)\, dx\right| \leq C_0 < +\infty,
    \quad &
    \mathop{\mathrm{ess\,sup}}\limits_{x\in\Omega}  |Dy(x)| 
    \leq C_1 < +\infty ,
    \\ &
    \det Dy(x)\ge 0 \text{ a.e.}
\end{split}
\end{equation}
We shall assume that $C_0>C_1|\Omega|\mathrm{diam \,}\Omega$. In addition, we 
denote by 
\begin{equation}\label{k8cJfL}
    K 
    = 
    \{ 
        \xi \in \mathbb{R}^{n\times n} \, : \, 
        |\xi| \leq C_1, \, \det \xi\ge0
    \}
\end{equation}
the compact set of local deformation gradients covered by $Y$. We endow $Y$ 
with the strong and weak-$*$ topologies inherited from $W^{1,\infty}$, and 
denote the corresponding convergences by $y_h \to y$ and $y_h \rightharpoonup 
y$, respectively. 

\item[A2)] Control space. The control space $U$ is the space of energy 
densities $u$ which are polyconvex and uniformly Lipschitz. In order to make 
this precise, let us denote by ${\rm Minors}(\xi)$ the set of minors of 
$\xi$, specifically, ${\rm Minors}(\xi)=(\xi,\det\xi)$ in two dimensions and 
$(\xi,\mathrm{cof\,}\xi,\det\xi)$ in three dimensions. We denote by $\tau(n)$ 
the total dimension of this vector (so that $\tau(2)=5$ and $\tau(3)=19$). We 
then require that there is a convex function $g:\R^{\tau(n)} \to [0,\infty)$ 
such that 
\begin{equation}\label{eqdefuqc}
    u(\xi)=g({\rm Minors}(\xi)) \text{ for all } \xi\in K
\end{equation}
and that $g$ is $\ell$-Lipschitz, meaning that
\begin{equation}
    |g(m)-g(m')|\le \ell|m-m'|
\text{ for all }m,m'\in \R^{\tau(n)}.
\end{equation}
We endow $U$ with the topology of uniform convergence.

\item[A3)] State functional: The deformations $y$ of the solid are assumed to 
minimize the potential energy 
\begin{equation} \label{8jtnW5}
    G(u,y;g) 
    =
    \left\{
        \begin{array}{ll}
            \int_\Omega
                u(Dy(x))
            \, dx ,
            & \text{if} \; T y = g \; \text{on} \; \partial\Omega , \\
            +\infty, & \text{otherwise} ,
        \end{array}
    \right.
\end{equation}
over all $y\in Y$. Here, $T$ is the trace operator, $g \in M \subset T Y 
\subset$ $W^{1,\infty}(\partial\Omega; \mathbb{R}^n)$ are prescribed 
displacements. The space ${M}$ describes all possible prescribed boundary 
displacements that can be imparted to the specimen by the loading device. 
We further denote by 
\begin{equation} \label{hmrsP3}
    E(u;g) = \inf G(u,\,\cdot;g) ,
\end{equation}
the infimum energy of the system characterized by a trial energy density 
$u$ under prescribed displacements $g \in M$. 
 
\item[A4)] Cost function. 
For given $E_0 : M \to \mathbb{R}$ and fixed $g \in M$, we let 
\begin{equation} \label{VgsvM1}
    I(u;g)
    =
   \left\{
        \begin{array}{ll}
            E(u;g) - E_0(g)  , 
            & \text{if} \; E_0(g) \leq E(u;g)    , \\
            +\infty , & \text{otherwise} .
        \end{array}
    \right.
\end{equation}
The total cost is then defined as 
\begin{equation} \label{ptMeIB}
    J(u) = \sup \, \{ I(u;g) \, : \, g \in M \} .
\end{equation}
This cost function supplies a measure of energy discrepancy between trial 
and measured values over the entire range $M$ of applied displacements. 
\end{itemize}

The cost function can equivalently be defined by introducing the constraint 
set 
\begin{equation} \label{YZta6C}
    C = \{ u \in U \, : \, E(u;g) \geq E_0(g), 
    \;\; {\text{for every}} \; g \in M \} ,
\end{equation}
which restricts trial energy densities to those whose total energy 
$E(u;\cdot)$ majorizes $E_0(\cdot)$. We then define the cost function as 
\begin{equation} \label{caTe55}
    J(u) = \sup_{g\in M} \big( E(u;g) - E_0(g) \big) + I_C(u) ,
\end{equation}
where $I_C$ is the indicator function of $C$, to be minimized with respect to 
$u$ in $U$. Thus, if $u\in C$, then $I(u,g)=E(u;g)-E_0(g)$ for every $g \in M$,
{and the expression in
\eqref{ptMeIB}  equals the one in  \eqref{caTe55}.} If $u\not\in C$, then there is $g$ such that
$E(u;g)<E_0(g)$, so that $I(u;g)=+\infty$, and both {the expression in 
\eqref{ptMeIB} and the one in \eqref{caTe55} are $+\infty$.} 

The preceding choices of state and control spaces, and their chosen 
topologies, ensure stability with respect to perturbations, in the sense that 
small errors in material behavior result in small errors in the state of 
deformation of the specimen. The analysis that follows also shows that the 
choice (\ref{ptMeIB}) of cost function is natural in the sense that the 
corresponding optimal control problem admits solutions that uniquely identify 
the underlying 'ground truth' material behavior from boundary data. 

Furthermore, the cost function has a natural {\sl monotonicity property} with 
respect to boundary data. Thus, suppose that two experimental programs are 
characterized by prescribed displacements $M'$ and $M''$. Then, it follows 
immediately from (\ref{ptMeIB}) that, if $M' \subset M''$, the corresponding 
cost functions are ordered as $J' \leq J''$. It thus follows that the 
experimental program $M''$ is more {\sl constraining}, i.~e., it assigns a 
higher cost to discrepancies in material behavior, than the program $M'$. 

\section{Identifiability from boundary data}

The optimal control problem set forth in the foregoing is
\begin{equation}\label{SC3dBH}
    \inf_{u\in U} J(u) ,
\end{equation}
i.~e., we seek to find the energy densities $u$ that minimize the discrepancy 
between predicted and measured total energy over the entire range of applied 
displacements. In addition, if $E_0(g) = \inf G(w,\cdot;g)$ for some unknown 
energy density $w \in U$, we wish to ascertain conditions under which $w$ is 
recovered as the unique solution of optimal control problem. 

Unless otherwise stated, through this section we assume that assumptions 
(A1--A4) are in force. 

\subsection{Prolegomena and notational conventions}

We set forth notation and summarize relevant background for completeness and 
ease of reference. 

\begin{definition}[$\Gamma$-convergence]\label{z1t0CI}
A sequence $F_h : X \to 
\overline{\mathbb{R}}{=\mathbb{R}\cup\{-\infty,+\infty\}}$ is said to {\sl 
$\Gamma$-converge} to a functional $F: X \to \overline{\mathbb{R}}$ if: 
\begin{itemize}
\item[i)] For every $x_h\to x$, $F(x) \leq \liminf_{h\to\infty} F_h(x_h)$.
\item[ii)] For every $x \in X$, there is $x_h\to x$ such that $F(x) \geq 
    \limsup_{h\to\infty} F_h(x_h)$. 
\end{itemize}
\end{definition}

The following theorems summarize the main properties of $\Gamma$-convergence 
(see, e.~g., \cite{Maso:1993}). We assume throughout that $X$ is a separable 
metric space. 

\begin{thm}[$\Gamma$-convergence]\label{HV42Sq} ${}$
\begin{itemize}
\item[i)] Every $\Gamma$-limit is lower-semicontinuous on $X$. 
\item[ii)] If $F_h$ is equicoercive on $X$ and $\Gamma$-converges to $F$, 
    then $F$ is coercive and admits a minimum in $X$. If, in addition, $F$ 
    is not identically $+\infty$ and $x_h\in{\sl argmin}\, F_h$, then there 
    exists a subsequence that converges to an element of ${\sl
    argmin} \, F$. 
\item[iii)] From every sequence $F_h$ of functionals on $X$ it is possible 
    to extract a subsequence $\Gamma$-converging to a functional $F$ on 
    $X$. 
\end{itemize}
\end{thm}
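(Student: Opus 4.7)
The plan is to treat the three parts in the order stated, since (ii) relies on (i) implicitly and (iii) is logically independent of the others. For (i), the goal is to show that if $x_h\to x$ in $X$, then $F(x)\le\liminf_h F(x_h)$. I would invoke property (ii) of Definition~\ref{z1t0CI} at each $x_h$ to obtain a recovery sequence $(z_{h,k})_k$ with $z_{h,k}\to x_h$ as $k\to\infty$ and $\limsup_k F_k(z_{h,k})\le F(x_h)$, and then extract a diagonal index $k(h)\to\infty$ so that $\tilde z_h:=z_{h,k(h)}\to x$ in $X$ and $F_{k(h)}(\tilde z_h)\le F(x_h)+1/h$. Since any subsequence of a $\Gamma$-converging sequence shares the same $\Gamma$-limit, property (i) of Definition~\ref{z1t0CI} applied along $F_{k(h)}$ at $\tilde z_h\to x$ yields $F(x)\le\liminf_h F(x_h)$.

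For (ii), I first show that any sequence $x_h\in\mathrm{argmin}\,F_h$ has bounded $F_h$-values. Picking $x_0$ with $F(x_0)<\infty$ (possible because $F\not\equiv+\infty$) and a recovery sequence $x_0^h\to x_0$ with $\limsup F_h(x_0^h)\le F(x_0)$, minimality gives $F_h(x_h)\le F_h(x_0^h)$, hence boundedness from above. Equicoercivity then extracts a subsequence $x_{h_j}\to\bar x$. Property (i) of $\Gamma$-convergence yields $F(\bar x)\le\liminf_j F_{h_j}(x_{h_j})$; for any competitor $y$, a recovery sequence $y_h\to y$ together with the minimality inequality $F_{h_j}(x_{h_j})\le F_{h_j}(y_{h_j})$ chains to $F(\bar x)\le F(y)$, so $\bar x\in\mathrm{argmin}\,F$. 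Coercivity of $F$ follows from the same equicoercive bound combined with the lower semicontinuity established in (i).

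The main effort, and the main obstacle, is part (iii). Since $X$ is separable metric, I would fix a countable base $\{U_n\}$ of open sets. For each $n$, the sequence $a_h^{(n)}:=\inf_{U_n}F_h\in[-\infty,+\infty]$ admits a convergent subsequence by compactness of $[-\infty,+\infty]$, and a standard diagonal extraction produces a subsequence $(F_{h_k})$ along which $a_{h_k}^{(n)}$ converges for every $n$. The candidate limit is
\[
    F(x):=\sup\bigl\{\lim_k a_{h_k}^{(n)}\,:\,x\in U_n\bigr\}.
\]
Property (i) of $\Gamma$-convergence is immediate from the infimum definition of $a_h^{(n)}$ applied along any sequence $y_k\to x$: eventually $y_k\in U_n$ for each $n$ containing $x$, so $F_{h_k}(y_k)\ge a_{h_k}^{(n)}$ and passing to $\liminf$ then $\sup_n$ gives the bound. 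The recovery sequence for property (ii) is produced by a second diagonal argument along a shrinking neighborhood base at $x$, selecting points in $U_n$ whose $F_{h_k}$ values approach $a_{h_k}^{(n)}$. This double diagonalization, and the verification that the supremum in the definition of $F$ is actually attained in the limit by the constructed recovery sequence, is the delicate step; once in place, the abstract compactness of $\Gamma$-convergence follows.
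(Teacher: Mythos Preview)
The paper does not prove this theorem. Theorem~\ref{HV42Sq} is stated as a background result with the sentence ``The following theorems summarize the main properties of $\Gamma$-convergence (see, e.~g., \cite{Maso:1993})'' and no proof is given; the paper simply defers to Dal Maso's monograph. So there is nothing in the paper to compare your argument against.

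That said, your sketch is essentially the standard route one finds in \cite{Maso:1993}. A couple of minor comments. In part (i), your diagonalization is correct but you should say explicitly that the indices $k(h)$ can be chosen strictly increasing (so that $(F_{k(h)})$ is a genuine subsequence) while simultaneously enforcing $d(z_{h,k(h)},x_h)<1/h$ and $F_{k(h)}(z_{h,k(h)})\le F(x_h)+1/h$; this is routine but worth stating. In part (ii), your remark that coercivity of $F$ uses lower semicontinuity is not quite accurate: the inclusion $\{F\le t\}\subset K_{t+1}$ follows directly from the recovery sequence and closedness of $K_{t+1}$, without invoking (i). In part (iii), your outline via a countable base and diagonal extraction of the infima $\inf_{U_n}F_h$ is exactly the classical argument; the ``delicate step'' you flag is handled in Dal Maso by a second diagonalization over a shrinking neighborhood basis at each point, and your description of it is adequate at the level of a sketch.
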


{The {\sl lower semicontinuous envelope} ${\rm sc}^- F: X \to \overline { 
\mathbb{R}}$  of a functional $F$ is defined as the supremum of all lower 
semicontinuous functionals majorized by $F$, i.~e., 
\begin{equation}
    {\rm sc}^- F(x) 
    = 
    \sup 
    \left\{ 
        G(x) \,:\, G \; \text{l.s.c.}, \ G\le F 
    \right\} ,
\end{equation}
where we write $G\le F$ if $G(x) \leq F(x)$ for all $x\in X$. It is easy to 
see that ${\rm sc}^-F$ is lower semicontinuous and 
\begin{equation}\label{defRel}
    {\rm sc}^- F(x)
    =
    \inf \left\{
    \liminf_{h\to\infty} F(x_h) :
    x_h \in X\,,\ x_h\to x\right\}\,.
\end{equation}
For any coercive functional $F$, the following holds \cite[p.30]{Maso:1993}:
\begin{enumerate}
\item ${\rm sc}^- F$ is coercive and  lower semicontinuous. 
\item ${\rm sc}^- F$ admits at least a minimum point. 
\item $\min_{x \in X} {\rm sc}^- F(x) = \inf_{x \in X} F(x)$. 
\item If $x$ is the limit of a minimizing sequence for $F$, then $x$ is a 
    minimum point for ${\rm sc}^- F$. 
\item If $x$ is a  minimum point for ${\rm sc}^- F$, then $x$ is the limit 
    of a minimizing sequence of $F$. \\ 
\end{enumerate}
}

We denote 
\begin{equation}
    (f \vee g)(x) = \max(f(x), g(x)),
    \quad
    (f \wedge g)(x) = \min(f(x), g(x)).
\end{equation}
For an index function $I$ and functions $\{f_i\}_{i\in I}$, we also write
\begin{equation}
    \bigvee_{i\in I} f_i(x)
    =
    \max\{f_i(x)\}_{i\in I} ,
    \quad
    \bigwedge_{i\in I} f_i(x)
    =
    \min\{f_i(x)\}_{i\in I} .
\end{equation}
A set of functions $L$ is a {\sl lattice} if for every $f$, $g \in L$ we have 
$f \wedge g \in L$ and $f \vee g \in L$.
 
\subsection{Properties of the state functional}

We begin by establishing basic properties of state functional $G$. In 
particular, we show that the assumed functional setting ensures the existence 
of minimizing states in $Y$ for all energy densities in $U$. In addition, we 
show that the functional setting is natural with respect to approximation, in 
the sense that convergence of  sequences of energy densities $(u_h)$ in $U$ 
ensures convergence of corresponding sequences of minimizers $(y_h)$ in $Y$. 
This property is of critical importance since, in practice, the energy 
density of a material is unlikely to be identified exactly. Therefore, it is 
natural to require that predicted states be increasingly accurate as the 
energy density itself is identified with increasing accuracy. 

We first observe that polyconvexity of $u$, in the sense of~\eqref{eqdefuqc}, 
implies lower semicontinuity of the corresponding integral functional. This 
can be embedded in the standard formulation making $u$ extended-valued, the 
treatment in this case is however simpler since the set where $u$ is finite 
is compact and polyconvex. Therefore we sketch the standard argument. 

\begin{prop}\label{lemmaqclsc}
Let $u\in U$. Then $y\mapsto \int_\Omega u(Dy)\,dx$ is  lower semicontinuous 
with respect to weak-* $W^{1,\infty}(\Omega;\R^n)$ convergence {in $Y$, and 
therefore with respect to uniform convergence in $Y$.} 
\end{prop}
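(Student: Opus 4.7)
The plan is to exploit the polyconvexity structure \eqref{eqdefuqc} and reduce the statement to two classical ingredients: weak continuity of the minors of the Jacobian for bounded sequences in $W^{1,\infty}$, and convex lower semicontinuity after composition with $g$. Because $g$ is finite, convex and Lipschitz on all of $\R^{\tau(n)}$, and the admissible gradients are confined to the compact set $K$, the measurability and integrability subtleties that usually accompany polyconvex integrands do not arise; this is the simplification alluded to in the paragraph preceding the statement.

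First I take a sequence $(y_h)\subset Y$ with $y_h\weakto y$ weakly-$*$ in $W^{1,\infty}(\Omega;\R^n)$. The bound $|Dy_h|\le C_1$ a.e.\ makes $\mathrm{Minors}(Dy_h)$ uniformly bounded in $L^\infty(\Omega;\R^{\tau(n)})$. The classical weak continuity of minors of gradients, obtained by writing cofactors and determinants in divergence form via the Piola identity and integrating by parts against test functions, then yields
\begin{equation*}
\mathrm{Minors}(Dy_h)\weakto \mathrm{Minors}(Dy)\ \text{ in } L^p(\Omega;\R^{\tau(n)}) \text{ for every } p\in[1,\infty).
\end{equation*}
As a byproduct, the pointwise constraint $\det Dy\ge 0$ passes to the weak limit, confirming that $Y$ itself is weakly-$*$ closed.

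Next, since $g\colon\R^{\tau(n)}\to[0,\infty)$ is convex and Lipschitz, the integral functional $m\mapsto\int_\Omega g(m)\,dx$ is convex and strongly continuous on bounded subsets of any $L^p$, and hence weakly lower semicontinuous by Mazur's lemma. Applying this to $m_h=\mathrm{Minors}(Dy_h)$ gives
\begin{equation*}
\int_\Omega u(Dy)\,dx \le \liminf_{h\to\infty}\int_\Omega u(Dy_h)\,dx,
\end{equation*}
which is the weak-$*$ lower semicontinuity claimed.

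For the final assertion about uniform convergence, I would argue via the Urysohn subsequence principle: if $y_h\to y$ uniformly in $Y$, then every subsequence is bounded in $W^{1,\infty}$ and admits, by Banach--Alaoglu, a weakly-$*$ convergent subsubsequence whose limit must coincide with $y$ by uniqueness of the pointwise limit. Applying the weak-$*$ lower semicontinuity just proved along each such subsubsequence yields the conclusion for the full sequence. I expect the only non-routine step to be the weak continuity of minors; the remainder reduces to standard convex analysis and an elementary subsequence manipulation.
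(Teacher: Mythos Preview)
Your proposal is correct and follows essentially the same route as the paper: weak continuity of the minors under weak-$*$ $W^{1,\infty}$ convergence, followed by convex lower semicontinuity of $m\mapsto\int_\Omega g(m)\,dx$, and then the observation that uniform convergence in the bounded set $Y$ upgrades to weak-$*$ convergence. The only cosmetic difference is that the paper states the last step directly (bounded $+$ uniformly convergent $\Rightarrow$ weak-$*$ convergent), whereas you unpack it via the Urysohn subsequence principle; both are equivalent and your more explicit justifications (Piola identity, Mazur's lemma) are welcome but not strictly needed at this level.
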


\begin{proof}
Let $y_h\in Y$, $y_h\rightharpoonup y$ in the weak-$*$  $W^{1,\infty}$ 
topology. This in particular implies uniform convergence and weak convergence 
of the minors, in the sense that 
\begin{equation}
{\rm Minors}(y_h)\rightharpoonup {\rm Minors}(y)
\end{equation}
in the weak-$*$ $W^{1,\infty}$ topology. From $|Dy_h|\le C_1$ almost 
everywhere and $Dy_h\weakto Dy$ we deduce $|Dy|\le C_1$ almost everywhere. 
Analogously, from $\det Dy_h\ge0$ and $\det Dy_h\weakto \det Dy$ we deduce 
$\det Dy\ge0$ almost everywhere, so that $Dy\in K$ almost everywhere. From 
uniform convergence one analogously obtains that the first condition in 
\eqref{eqboundsY} holds for $y$, so that $y\in Y$. 

In order to prove lower semicontinuity of the energy, let $g: \R^{\tau(n)} 
\to [0,\infty)$ be a convex function as in~\eqref{eqdefuqc}. Then
\begin{equation}
    \int_\Omega u(Dy_h)dx
=\int_\Omega g({\rm Minors}(Dy_h))dx,
\end{equation}
and by convexity one obtains
\begin{equation}
    \int_\Omega u(Dy)dx
    =\int_\Omega g({\rm Minors}(Dy))dx
    \le \liminf_{h\to\infty}\int_\Omega g({\rm Minors}(Dy_h))dx.
\end{equation}
This proves lower semicontinuity.

{Since every sequence in $Y$ is bounded in $W^{1,\infty}(\Omega;\R^n)$, every 
sequence in $Y$ which converges uniformly also converges with respect to the  
weak-$*$ topology of $W^{1,\infty}(\Omega;\R^n)$.} 
\end{proof}

The existence of minimizing deformations now follows from the following 
standard argument due to Tonelli \cite{Tonelli:1921}. 

\begin{cor} \label{ZHSbDO}
For every $g\in M$ and $u\in U$ the functional $G(u,\cdot;g)$ is not 
everywhere $+\infty$ and has a minimizer in $Y$. 
\end{cor}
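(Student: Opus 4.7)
The plan is to use the direct method of the calculus of variations. First I would observe that the admissible set is non-empty: by assumption (A3) we have $g \in M \subset TY$, so there is some $\hat y \in Y$ with $T\hat y = g$; since $D\hat y(x) \in K$ a.e.\ and $u$ is continuous with values in $[0,\infty)$ on the compact set $K$ via \eqref{eqdefuqc}, the integral $\int_\Omega u(D\hat y)\,dx$ is finite, and hence $G(u,\hat y;g)<+\infty$. This already gives the first half of the statement.

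Next I would take a minimizing sequence $(y_h)\subset Y$ with $Ty_h=g$ and $\int_\Omega u(Dy_h)\,dx \to \inf_{y\in Y}G(u,y;g)$. The bounds in \eqref{eqboundsY} make $(y_h)$ equi-Lipschitz and uniformly bounded, so by Arzel\`a--Ascoli a subsequence converges uniformly on $\bar\Omega$ to some $y$, and by Banach--Alaoglu applied to $(Dy_h)$ in $L^\infty$ I may further pass to a subsequence with $Dy_h \rightharpoonup Dy$ weakly-$*$ in $L^\infty$; together this amounts to weak-$*$ convergence $y_h \rightharpoonup y$ in $W^{1,\infty}(\Omega;\mathbb{R}^n)$. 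The argument at the beginning of the proof of Proposition \ref{lemmaqclsc} shows that the three conditions in \eqref{eqboundsY} pass to this limit, so $y \in Y$. Since the trace operator is continuous from $C^0(\bar\Omega;\mathbb{R}^n)$ to $C^0(\partial\Omega;\mathbb{R}^n)$ with respect to uniform convergence, $Ty = \lim_h Ty_h = g$, and $y$ is admissible.

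To close, I would invoke Proposition \ref{lemmaqclsc} to obtain
\[
    \int_\Omega u(Dy)\,dx \le \liminf_{h\to\infty} \int_\Omega u(Dy_h)\,dx = \inf_{y\in Y} G(u,y;g),
\]
which identifies $y$ as a minimizer. I expect no serious obstacle: the essential analytic content, namely polyconvex lower semicontinuity together with closedness of $Y$ under weak-$*$ convergence, has already been carried out in Proposition \ref{lemmaqclsc}, and what remains is routine extraction of a weak-$*$ convergent subsequence from a bounded sequence in $W^{1,\infty}$ and the continuity of the trace under uniform limits on a Lipschitz domain.
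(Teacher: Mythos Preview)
Your proposal is correct and follows essentially the same route as the paper: show the admissible set is nonempty via $M\subset TY$, extract a weak-$*$ convergent subsequence from a minimizing sequence using the built-in bounds on $Y$, and conclude by the lower semicontinuity established in Proposition~\ref{lemmaqclsc}. The paper's version is terser---it simply invokes weak compactness of $Y$ and bounds $u(Dy)\le u(0)+C_1\ell$ via the Lipschitz property---whereas you spell out Arzel\`a--Ascoli/Banach--Alaoglu and the trace argument explicitly, but the underlying argument is the same direct-method proof.
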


\begin{proof}
Since $M \subset T Y$, there is $y \in Y$ such that $T y = g$. For this $y$, 
\begin{equation}
    G(u,y;g)
    =
    \int_\Omega u(Dy(x)) \, dx
    \leq
    \int_\Omega(u(0)+C_1\ell)\, dx
    <
    +\infty ,
\end{equation}
and the first claim follows.

Let $y_h$ be a minimizing sequence. As $Y$ is weakly compact, there is a 
subsequence which converges weakly to some $y\in Y$. By 
Prop.~\ref{lemmaqclsc} this implies 
\begin{equation}
    G(u,y;g) 
    \le
    \liminf_{h\to\infty} G(u,y_h;g) ,
\end{equation}
and $y$ is a minimizer.
\end{proof}

Next we turn to the question of approximating energy densities and attendant 
convergence of minimizers thereof.

\begin{prop}\label{lemmaUclosed}
${}$
\begin{itemize}
\item[i)] The set $U$ is closed, in the sense that if $u_j\in U$, and 
    $u_j\to u$ uniformly, then $u\in U$. 
\item[ii)]The set $U$ is compact {modulo constants}, in the sense that for 
    any sequence $u_j\in U$ there are a subsequence $u_{j_k}$ {and
    constants $c_j\in\R$ such that $u_{j_k}-c_{j_k}$} converges uniformly 
    to some $u\in U$. 
\end{itemize}
\end{prop}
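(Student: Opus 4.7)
The plan is to prove both items by working with the convex $\ell$-Lipschitz non-negative extensions $g_j : \R^{\tau(n)} \to [0,\infty)$ that witness each $u_j \in U$. The family $\{g_j\}$ is equi-Lipschitz with constant $\ell$, so once a uniform pointwise bound is established at some anchor point, Arzel\`a--Ascoli combined with a diagonal argument delivers a subsequence converging uniformly on every compact subset of $\R^{\tau(n)}$, and the limit automatically inherits convexity, the Lipschitz estimate and non-negativity.

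For (i), assume $u_j \to u$ uniformly on $K$. Fix any $\xi_0 \in K$ and set $m_0 = \mathrm{Minors}(\xi_0)$, so that $g_j(m_0) = u_j(\xi_0)$ is bounded in $j$. Combined with the Lipschitz bound this forces equiboundedness of $g_j$ on every compact set, so a subsequence $g_{j_k}$ converges uniformly on compact sets to a convex, $\ell$-Lipschitz, non-negative $g : \R^{\tau(n)} \to [0,\infty)$. Since $\mathrm{Minors}(K)$ is compact, the convergence is uniform there, and passing to the limit in $u_j = g_j \circ \mathrm{Minors}$ on $K$ gives $u = g \circ \mathrm{Minors}$ on $K$; hence $u \in U$.

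For (ii), the subtle point is to choose constants $c_j$ that rescue not only the boundedness of $u_j$ on $K$ but also the non-negativity of a candidate extension. I set $c_j := \min_K u_j$. Since $\mathrm{Minors}$ is smooth on the compact set $K$, it is Lipschitz there with some constant $L$, so each $u_j$ is $\ell L$-Lipschitz on $K$ and $v_j := u_j - c_j$ is non-negative and bounded by $\ell L\,\mathrm{diam}(K)$ there. The naive shifted extension $g_j - c_j$ may fail to be non-negative off $\mathrm{Minors}(K)$, so instead I use
\begin{equation*}
    \hat g_j \,:=\, \max\bigl(g_j - c_j,\, 0\bigr),
\end{equation*}
which is convex (maximum of two convex functions), $\ell$-Lipschitz and non-negative, and which agrees with $g_j - c_j$ on $\mathrm{Minors}(K)$ because $v_j \geq 0$ there. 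Thus $\hat g_j \circ \mathrm{Minors} = v_j$ on $K$, and $\hat g_j(m_0) = v_j(\xi_0) \in [0, \ell L\,\mathrm{diam}(K)]$ supplies the required anchor bound. The same Arzel\`a--Ascoli plus diagonal argument as in (i) then extracts a subsequence $\hat g_{j_k}$ converging uniformly on compact sets to a convex, $\ell$-Lipschitz, non-negative $\hat g$, and $u_{j_k} - c_{j_k} = \hat g_{j_k} \circ \mathrm{Minors}$ converges uniformly on $K$ to $\hat g \circ \mathrm{Minors} \in U$.

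The main obstacle is common to both parts: the convergence of $u_j$ is recorded only on $K$, whereas membership in $U$ requires constructing an extension on all of $\R^{\tau(n)}$ that is simultaneously convex, $\ell$-Lipschitz and non-negative. For (i) an anchor point suffices. For (ii) the added complication is that the natural normalization $g_j - c_j$ can become negative off $\mathrm{Minors}(K)$; joining with zero via the positive part repairs this without disturbing convexity, the Lipschitz estimate or the values on $\mathrm{Minors}(K)$, and is the only nontrivial device in the argument.
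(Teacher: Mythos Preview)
Your proof is correct and follows essentially the same strategy as the paper: work with the convex $\ell$-Lipschitz witnesses $g_j$, anchor them at a point of $\mathrm{Minors}(K)$, and apply Arzel\`a--Ascoli with a diagonal argument to extract a locally uniformly convergent subsequence whose limit witnesses membership of the uniform limit in $U$.

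The one noteworthy difference is in part~(ii). The paper simply takes $c_j := u_j(0)$ and then appeals to part~(i), but this is slightly elliptic: with that choice $u_j - c_j$ may take negative values on $K$, so it is not literally an element of $U$ (since the definition requires the witness $g$ to map into $[0,\infty)$), and the direct appeal to~(i) is not quite legitimate. Your choice $c_j := \min_K u_j$ ensures $v_j \ge 0$ on $K$, and your replacement of the shifted witness by $\hat g_j := \max(g_j - c_j, 0)$ restores global non-negativity without disturbing convexity, the $\ell$-Lipschitz bound, or the values on $\mathrm{Minors}(K)$. This is a genuine (if minor) improvement over the paper's argument, which tacitly ignores the non-negativity constraint on $g$. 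Once $v_j \in U$ is established you could in fact invoke~(i) directly after an Arzel\`a--Ascoli extraction on $K$, rather than rerunning the argument on the $\hat g_j$; either route is fine.
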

\begin{proof}
 i) Let $u_j\in U$, $u_j\to u$ uniformly for some $u:K\to\R$.
Let $g_j:\R^{\tau(n)}\to\R$ be such that $u_j=g_j\circ{\rm Minors}$. The 
functions $g_j$ are uniformly $\ell$-Lipschitz, and $g_j(0)=u_j(0)\to u(0)$. 
By the Ascoli-Arzel\'a theorem for every $R>0$ there is a subsequence that 
converges uniformly on $[-R,R]^{\tau(n)}$, by a diagonalization argument we 
obtain a unique subsequence that converges locally uniformly to some 
$g:\R^{\tau(n)}\to\R$. {This implies $u=g\circ{\rm Minors}$. Further, locally 
uniform convergence implies that $g$ is also $\ell$-Lipschitz.} Therefore 
$u\in U$. 

ii) {We choose $c_j:=u_j(0)$. As the} functions $u_j-c_j$ are uniformly 
Lipschitz on the compact set $K$, {they are bounded, by the} Ascoli-Arzel\`a 
theorem they have a uniformly converging subsequence. The assertion then 
follows {by  i)}. 
\end{proof}

Convergence of minimizers is now ensured by the following 
$\Gamma$-convergence property of the state functionals.

\begin{prop}\label{SH8qXX}
For every $g\in {M}$ and every sequence $u_h\to u$ in $U$, 
\begin{equation} \label{cqC8FR}
    \Gamma\text{-}\lim_{h\to\infty} G(u_h,\cdot;g) = G(u,\cdot;g),
\end{equation}
weak-$*$ in $Y$.
\end{prop}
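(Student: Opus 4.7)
My plan is to verify the two conditions in Definition~\ref{z1t0CI} directly, using Proposition~\ref{lemmaqclsc} for the liminf inequality and a constant recovery sequence.

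For the recovery sequence (condition (ii)), given $y\in Y$ I will take $y_h=y$ for every $h$. If $Ty=g$ then $Ty_h=g$ for all $h$ and
\begin{equation*}
G(u_h,y_h;g)=\int_\Omega u_h(Dy)\,dx \longrightarrow \int_\Omega u(Dy)\,dx = G(u,y;g),
\end{equation*}
since $u_h\to u$ uniformly on the compact set $K$ containing $Dy(x)$ almost everywhere, and $|\Omega|<\infty$. If $Ty\neq g$ then $G(u,y;g)=+\infty$ and the limsup inequality is trivial. Note that this step uses only uniform convergence on $K$, which is precisely the topology with which $U$ was endowed in A2.

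For the liminf inequality (condition (i)), I will take any sequence $y_h\weakto y$ weak-* in $Y$. If $\liminf_h G(u_h,y_h;g)=+\infty$ there is nothing to prove, so after passing to a subsequence achieving the liminf I may assume $Ty_h=g$ for every $h$ and all energies are finite. Weak-* $W^{1,\infty}$ convergence implies uniform convergence, hence $Ty_h\to Ty$ uniformly on $\partial\Omega$, forcing $Ty=g$ so that the target $G(u,y;g)=\int_\Omega u(Dy)\,dx$ is finite. I will then split
\begin{equation*}
\int_\Omega u_h(Dy_h)\,dx = \int_\Omega u(Dy_h)\,dx + \int_\Omega \bigl(u_h(Dy_h)-u(Dy_h)\bigr)\,dx,
\end{equation*}
bound the second term in absolute value by $|\Omega|\,\|u_h-u\|_{L^\infty(K)}$ (which tends to $0$), and apply the weak-* lower semicontinuity from Proposition~\ref{lemmaqclsc} to the first term.

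There is no serious obstacle here: the polyconvexity-based lower semicontinuity is already done in Proposition~\ref{lemmaqclsc}, compactness of $K$ makes the uniform convergence argument immediate, and the boundary condition is preserved because weak-* convergence in $W^{1,\infty}(\Omega;\R^n)$ entails uniform convergence and hence uniform convergence of traces. The only point that deserves a sentence of care is the reduction to the case $Ty_h=g$ for all $h$, which I will handle by extracting a subsequence along which the liminf is realized.
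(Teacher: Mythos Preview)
Your proposal is correct and follows essentially the same argument as the paper's proof: both use the constant recovery sequence $y_h=y$ for the limsup inequality, and for the liminf inequality both replace $u_h$ by $u$ via the uniform-convergence bound $|\Omega|\,\|u_h-u\|_{L^\infty(K)}\to 0$ and then invoke Proposition~\ref{lemmaqclsc}, with preservation of the trace $Ty=g$ deduced from the fact that weak-$*$ convergence in $W^{1,\infty}$ implies uniform convergence. Your treatment is in fact slightly more careful in explicitly handling the reduction to the case $Ty_h=g$ for all $h$ by passing to a subsequence realizing the liminf.
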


\begin{proof}
We refer to Def.~\ref{z1t0CI} for the definition of $\Gamma$-convergence. i) 
Suppose that $u_h \to u$ in $U$ (i.e., uniformly) and $y_h \rightharpoonup y$ 
in $Y$ (i.e., weak-$*$ in $W^{1,\infty}$). By definition of $Y$, $Dy_h\in K´$ 
almost everywhere for all $h$. As $u_h\to u$ uniformly on $K$, we have 
\begin{equation}\label{eqcompconv} \lim_{h\to\infty} 
    \sup_{\xi\in K} |u_h(\xi)-u(\xi)|=0.
\end{equation}
Therefore, as $\Omega$ is bounded and $Ty_h=g$ for all $h$,
\begin{equation}\label{BkXDr8a}
\begin{split}
    \liminf_{h\to\infty} G(u_h,y_h;g)
    =&
    \liminf_{h\to\infty} \int_\Omega u_h(Dy_h(x)) \, dx
    = 
    \liminf_{h\to\infty} \int_\Omega u(Dy_h(x)) \, dx.
\end{split}
\end{equation}
By Prop.~\ref{lemmaqclsc},
\begin{equation}\label{BkXDr8}
\begin{split}
    \liminf_{h\to\infty} \int_\Omega u(Dy_h(x)) \, dx
    \geq
    \int_\Omega u(Dy(x)) \, dx
    =
    G(u,y;g) .
\end{split}
\end{equation}
Since $y_h\weakto y$ implies in particular uniform convergence, from
$Ty_h=g$ we obtain $Ty=g$, so that
\begin{equation}\label{BkXDr8aa}
\begin{split}
    \int_\Omega u(Dy(x)) \, dx
    =
    G(u,y;g) ,
\end{split}
\end{equation}
which concludes the proof.

ii) Suppose that $u_h \to u$ in $U$ and $y \in Y$ with $Ty=g$. By
\eqref{eqcompconv},
\begin{equation}\label{BkXDr8b}
\begin{split}
    \limsup_{h\to\infty} G(u_h,y;g)
    & =
    \limsup_{h\to\infty} \int_\Omega u_h(Dy(x)) \, dx
    \\ & = 
    \int_\Omega u(Dy(x)) \, dx = G(u,y;g) ,
\end{split}
\end{equation}
as required. 
\end{proof}

\begin{cor} \label{7nwWkA}
Suppose that $g\in {M}$ and $u_h\to u$ in $U$. Let $(y_h)$ be a corresponding 
low-energy sequence in $Y$, i.~e., 
\begin{equation} \label{mggFm3}
    G(u_h,y_h;g) \leq \inf G(u_h,\cdot;g) + \epsilon_h ,
\end{equation}
for some sequence $\epsilon_h \downarrow 0$. Then, there are a subsequence 
and $y \in {\rm argmin} \, G(u,\cdot;g)$ such that $y_h \rightharpoonup y$ in 
$Y$. 
\end{cor}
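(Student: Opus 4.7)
The plan is to invoke the $\Gamma$-convergence result of Proposition~\ref{SH8qXX} together with the weak-$*$ compactness of $Y$, along the lines of Theorem~\ref{HV42Sq}(ii) adapted to near-minimizing sequences.

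First, I would observe that since $\epsilon_h \downarrow 0$ and, by Corollary~\ref{ZHSbDO}, $\inf G(u_h,\cdot;g)$ is finite for each $h$, the low-energy inequality \eqref{mggFm3} forces $G(u_h,y_h;g) < +\infty$; hence $T y_h = g$ for every $h$ and $y_h \in Y$. Because $Y$ is bounded in $W^{1,\infty}(\Omega;\R^n)$, a subsequence $y_{h_k}$ converges weak-$*$ to some limit $y$. The fact that $y \in Y$ has already been checked in the proof of Proposition~\ref{lemmaqclsc}, and the boundary condition $T y = g$ follows from the fact that weak-$*$ convergence in $W^{1,\infty}$ implies uniform convergence on $\overline\Omega$, so that $T y_{h_k} \to T y$ uniformly on $\partial\Omega$. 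Thus $y$ is admissible for $G(u,\cdot;g)$.

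Next, I would show that $y$ minimizes $G(u,\cdot;g)$. The $\Gamma$-liminf inequality of Proposition~\ref{SH8qXX}, applied to $y_{h_k} \rightharpoonup y$, together with \eqref{mggFm3}, gives
\begin{equation}
G(u,y;g) \le \liminf_{k\to\infty} G(u_{h_k},y_{h_k};g) \le \liminf_{k\to\infty} \bigl(\inf G(u_{h_k},\cdot;g) + \epsilon_{h_k}\bigr).
\end{equation}
For an arbitrary admissible competitor $\tilde y \in Y$ with $T\tilde y = g$, the recovery part of Proposition~\ref{SH8qXX} (realized by the constant sequence $\tilde y_h = \tilde y$, as in the proof of that proposition) provides
\begin{equation}
\inf G(u_h,\cdot;g) \le G(u_h,\tilde y;g) \longrightarrow G(u,\tilde y;g).
\end{equation}
Chaining these two bounds yields $G(u,y;g) \le G(u,\tilde y;g)$ for every admissible $\tilde y$, and the inequality is trivial otherwise; hence $y \in \mathrm{argmin}\, G(u,\cdot;g)$.

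The only delicate point is ensuring that the boundary condition $T y = g$ survives the passage to the weak-$*$ limit; this is handled by the uniform convergence of the $y_{h_k}$ implied by their weak-$*$ convergence in $W^{1,\infty}$. Everything else amounts to the standard extraction argument underlying Theorem~\ref{HV42Sq}(ii), extended with no difficulty from exact to approximate minimizers by absorbing the $\epsilon_h$ inside the liminf.
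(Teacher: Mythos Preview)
Your proposal is correct and follows essentially the same route as the paper: weak-$*$ compactness of $Y$ to extract a convergent subsequence, then the $\Gamma$-convergence of Proposition~\ref{SH8qXX} to conclude that the limit is a minimizer. The only cosmetic difference is that the paper invokes the convergence of infima $\inf G(u,\cdot;g)=\lim_h \inf G(u_h,\cdot;g)$ (a standard consequence of $\Gamma$-convergence on the compact set $Y$) as a single step, whereas you unpack it by fixing a competitor $\tilde y$ and using the $\limsup$/recovery inequality directly; both arguments are equivalent.
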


\begin{proof}
Since $Y$ is weakly compact, there is a subsequence of $(y_h)$, not renamed, 
which converges weakly to some $y\in Y$. By the $\Gamma$-convergence of 
$G(u_h,\cdot;g)$, Prop.~\ref{SH8qXX}, and (\ref{mggFm3}), we have 
\begin{equation}
    \inf G(u,\cdot; g)
    =
    \lim_{h\to\infty} \inf G(u_h,\cdot;g)
    \geq
    \liminf_{h\to\infty} G(u_h,y_h;g) 
    \geq 
    G(u,y;g) ,
\end{equation} 
and $y \in {\rm argmin} \, G(u,\cdot;g)$, as surmised.
\end{proof}

Convergence of minimizers can in fact be strong under certain circumstances. 

\begin{prop} [Strong convergence]
Under the conditions of Prop.~\ref{SH8qXX} and Cor.~\ref{7nwWkA}, 
if additionally, for $y \in {\rm argmin} \, G(u,\cdot;g)$ and all $y' \in Y$, 
\begin{equation}\label{eqassGuniq}
    G(u,y';g) - G(u,y;g) 
    \geq
    \varphi( \| y' - y \|_{{\rm Lip}(K)} ) ,
\end{equation}
for some continuous, strictly increasing function over $[0,+\infty)$ with 
$\varphi(0) = 0$, then convergence of minimizers is actually strong. 
\end{prop}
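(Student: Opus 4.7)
The plan is to combine the $\Gamma$-convergence machinery from Prop.~\ref{SH8qXX} and Cor.~\ref{7nwWkA} with the quantitative lower bound \eqref{eqassGuniq} in order to upgrade weak to strong convergence. I would first apply Cor.~\ref{7nwWkA} to extract a (not relabeled) subsequence with $y_h\rightharpoonup y$ weak-$*$ in $Y$ and $y\in\mathrm{argmin}\,G(u,\cdot;g)$. Observe that \eqref{eqassGuniq} forces $y$ to be the \emph{unique} minimizer of $G(u,\cdot;g)$ among admissible elements of $Y$; hence a standard Urysohn subsequence argument then promotes the subsequential convergence to convergence of the entire sequence $(y_h)$.

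The central quantitative step is to establish the numerical energy convergence
\begin{equation*}
    G(u,y_h;g) \longrightarrow G(u,y;g) .
\end{equation*}
Inspection of the proof of Cor.~\ref{7nwWkA} already yields $\inf G(u_h,\cdot;g) \to G(u,y;g)$, and combining this with the low-energy condition \eqref{mggFm3} gives $G(u_h,y_h;g)\to G(u,y;g)$. Because $Dy_h(x)\in K$ almost everywhere and $u_h\to u$ uniformly on the compact set $K$, one estimates
\begin{equation*}
    |G(u,y_h;g)-G(u_h,y_h;g)|
    \le
    |\Omega|\,\sup_{\xi\in K}|u(\xi)-u_h(\xi)|
    \longrightarrow 0 ,
\end{equation*}
and the triangle inequality delivers the claim.

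The conclusion is then immediate: each $y_h$ satisfies $Ty_h=g$ by finiteness of its energy, hence is admissible in \eqref{eqassGuniq}, which gives
\begin{equation*}
    \varphi(\|y_h-y\|_{\mathrm{Lip}(K)})
    \le
    G(u,y_h;g)-G(u,y;g)
    \longrightarrow 0 .
\end{equation*}
Since $\varphi$ is continuous and strictly increasing with $\varphi(0)=0$, this forces $\|y_h-y\|_{\mathrm{Lip}(K)}\to 0$, i.e., strong convergence. I do not anticipate a substantive obstacle here: the hypothesis \eqref{eqassGuniq} is tailor-made to effect the weak-to-strong upgrade once the energies $G(u,y_h;g)$ converge to the minimum value. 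The only delicate bookkeeping is the replacement of $u_h$ by $u$ in the triangle inequality, which is cleanly handled by uniform convergence on the compact gradient set $K$.
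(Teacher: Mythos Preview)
Your proposal is correct and follows essentially the same route as the paper: establish $G(u_h,y_h;g)\to G(u,y;g)$ from $\Gamma$-convergence, swap $u_h$ for $u$ via the uniform bound $|G(u,y_h;g)-G(u_h,y_h;g)|\le|\Omega|\sup_K|u-u_h|$, and then feed $G(u,y_h;g)-G(u,y;g)\to0$ into \eqref{eqassGuniq} to force $\|y_h-y\|_{\mathrm{Lip}(K)}\to0$. Your additional remarks on uniqueness of the minimizer and the Urysohn subsequence argument make explicit a point the paper leaves implicit, but the core argument is the same.
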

We remark that in the special case $g(x) = \xi x$, $\xi \in 
\mathbb{R}^{n\times n}$, by polyconvexity of $u$ \eqref{eqassGuniq} reduces
to 
\begin{equation}
    G(u,y';g) - u(\xi) \, |\Omega| 
    \geq
    \varphi( \| y' - \xi x \|_{{\rm Lip}(K)} ) ,
\end{equation}
which requires $u$ to be strictly quasiconvex at $\xi$.
\begin{proof}
By $\Gamma$-convergence, we have 
\begin{equation}
    \lim_{h\to\infty} G(u_h,y_h;g) = G(u,y;g) .
\end{equation} 
Write
\begin{equation}
\begin{split}
    G(u_h,y_h;g) - G(u,y;g)
    & = 
    G(u_h,y_h;g) - G(u,y_h;g) 
    \\ & +
    G(u,y_h;g) - G(u,y;g) .
\end{split}
\end{equation} 
By uniform convergence, 
\begin{equation}
    \lim_{h\to\infty} 
    \Big(
        G(u_h,y_h;g) - G(u,y_h;g) 
    \Big) 
    =
    0 .
\end{equation} 
Hence, 
\begin{equation}
    \lim_{h\to\infty} 
    \Big(
        G(u,y_h;g) - G(u,y;g) 
    \Big) 
    =
    0 .
\end{equation} 
 Then, by~\eqref{eqassGuniq},
\begin{equation}
    \lim_{h\to\infty} 
    \| y_h - y \|_{{\rm Lip}(K)}
    =
    0 ,
\end{equation} 
and the convergence of the minimizers is indeed strong. 
\end{proof}

\subsection{Existence of optimal controls}

We recall that $y_h \to y$ and $y_h \rightharpoonup y$ denote strong and weak 
convergence in $Y$, respectively, and that we are using uniform convergence 
on $U$. 

The following propositions set forth basic properties of the cost functional 
$J$. 

\begin{prop}[Constraint set] \label{uH5p92} 
The set $C$, eq.~(\ref{YZta6C}), is strongly closed and convex. 
\end{prop}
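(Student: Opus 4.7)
The plan is to address the two properties separately, using different tools for each.

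For convexity, I would argue directly from the linearity of the state functional in $u$ and from the convexity of $U$ itself. First I would observe that if $u_1=g_1\circ{\rm Minors}$ and $u_2=g_2\circ{\rm Minors}$ with $g_1,g_2$ convex and $\ell$-Lipschitz, then for $\lambda\in[0,1]$ the convex combination $\lambda u_1+(1-\lambda)u_2$ is represented by $\lambda g_1+(1-\lambda)g_2$, which is still convex and $\ell$-Lipschitz, so the control space $U$ is convex. Then, for any admissible $y$ with $Ty=g$, the integrand is linear in $u$ and hence
\[
G(\lambda u_1+(1-\lambda)u_2,y;g)=\lambda G(u_1,y;g)+(1-\lambda)G(u_2,y;g)\ge \lambda E(u_1;g)+(1-\lambda)E(u_2;g)\ge E_0(g),
\]
using $u_1,u_2\in C$ in the last inequality. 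Taking the infimum over $y\in Y$ with $Ty=g$ gives $E(\lambda u_1+(1-\lambda)u_2;g)\ge E_0(g)$ for every $g\in M$, i.e., $\lambda u_1+(1-\lambda)u_2\in C$.

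For strong closedness, I would take $u_h\in C$ with $u_h\to u$ uniformly and show $E(u;g)\ge E_0(g)$ for every fixed $g\in M$. By Proposition \ref{lemmaUclosed}(i), $u\in U$. The key ingredient is that Proposition \ref{SH8qXX} gives $\Gamma$-convergence of $G(u_h,\cdot;g)$ to $G(u,\cdot;g)$ in the weak-$*$ topology of $Y$. Since $Y$ is weak-$*$ compact (it is bounded in $W^{1,\infty}$, closed under weak-$*$ limits as established in the proof of Proposition \ref{lemmaqclsc}), the sequence $G(u_h,\cdot;g)$ is trivially equicoercive on $Y$, and by Theorem \ref{HV42Sq}(ii) together with the existence of minimizers from Corollary \ref{ZHSbDO}, the infima converge:
\[
E(u_h;g)=\inf_{Y} G(u_h,\cdot;g)\longrightarrow \inf_{Y} G(u,\cdot;g)=E(u;g).
\]
Since $E(u_h;g)\ge E_0(g)$ for all $h$, passing to the limit yields $E(u;g)\ge E_0(g)$, hence $u\in C$.

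The only mildly delicate step is confirming convergence of the minima from the $\Gamma$-convergence statement. The cleanest route is to apply the $\liminf$-inequality along a weak-$*$ convergent subsequence of approximate minimizers of $G(u_h,\cdot;g)$ (which exists by weak-$*$ compactness of $Y$) to get $\liminf E(u_h;g)\ge E(u;g)$, and to apply the recovery sequence of Proposition \ref{SH8qXX}(ii), namely the constant sequence $y$ with $Ty=g$ attaining $E(u;g)$, to get $\limsup E(u_h;g)\le E(u;g)$. Both directions are immediate once the weak-$*$ compactness of $Y$ has been invoked, so no genuine obstacle arises.
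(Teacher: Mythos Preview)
Your proof is correct and follows essentially the same approach as the paper. For convexity, both you and the paper exploit linearity of $G(\cdot,y;g)$ in $u$; your version takes the infimum over all admissible $y$, while the paper evaluates at an actual minimizer $y^*$ (via Corollary~\ref{ZHSbDO}), but the two are equivalent. Your explicit verification that $U$ itself is convex is a point the paper leaves implicit. For closedness, the paper in fact offers two arguments: the one you give, based on Proposition~\ref{SH8qXX} and convergence of infima under $\Gamma$-convergence, appears in the paper as the ``alternative more explicit argument''; the paper also sketches a shorter abstract route, observing that $E(\cdot;g)=\inf_y G(\cdot,y;g)$ is an infimum of continuous affine functionals and hence concave and upper semicontinuous, so its superlevel sets are automatically closed and convex (this is essentially the content of Lemma~\ref{lemmaUC0}). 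Both routes are valid; your detailed justification of the convergence of infima via the $\liminf$ and $\limsup$ directions is more careful than the paper's own statement.
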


\begin{proof}
For any fixed $y$, $u\mapsto G(u,y;g)$ is linear and continuous (with respect 
to uniform convergence of $u$). In particular, it is concave and upper 
semicontinuous. Both properties are preserved by taking the $\inf$ (just like 
convexity and lower semicontinuity with the $\sup$), therefore $E(u;g)=\inf_y 
G(u,y;g)$ is concave and upper semicontinuous. The superset of a concave, 
upper-semicontinuous function is convex and closed. Actually, {one can show 
that $E$} is continuous, see lemma~\ref{lemmaUC0}. 

An alternative more explicit argument is as follows. Suppose that the 
sequence $(u_h)$ is contained in $C$ and converges strongly to $u \in U$. 
Then, by prop.~\ref{SH8qXX}, the properties of $\Gamma$-convergence and the 
compactness of $U$, prop.~\ref{lemmaUclosed}{ii}, for a.~e.~$g \in M$ it 
follows that 
\begin{equation}
    E_0(g)
    \leq
    \lim_{h\to\infty} E(u_h;g)
    =
    \lim_{h\to\infty} \inf G(u_h,\cdot;g)
    =
    \inf G(u,\cdot;g)
    =
    E(u;g) ,
\end{equation}
hence $u \in C$. 

For fixed $g \in M$, let $u_1$, $u_2 \in U$, $\lambda_1\geq 0$, $\lambda_2 
\geq 0$, $\lambda_1+\lambda_2=1$. By Corollary~\ref{ZHSbDO}, there is $y^*\in 
Y$ such that $T y^* = g$ and 
\begin{equation} 
    E(\lambda_1u_1+\lambda_2u_2;g) 
    = 
    G(\lambda_1u_1+\lambda_2u_2,y^*;g) 
\end{equation}
in (\ref{hmrsP3}). From (\ref{8jtnW5}) we then have
\begin{equation}
\begin{split}
    G(\lambda_1u_1+\lambda_2u_2,y^*;g)
    & =
    \int_\Omega
        \Big(
            \lambda_1 u_1(Dy_*(x))
            +
            \lambda_2 u_2(Dy_*(x))
        \Big)
    \, dx 
    \\ & \geq
    \lambda_1 E(u_1;g) + \lambda_2 E(u_2;g) ,
\end{split}
\end{equation}
which shows that $E(u;g)$ is a concave function of $u$. {If $u_1$, $u_2\in C$ 
then 
    $E(u_1;g)\ge E_0(g)$ and $E(u_2;g)\ge E_0(g)$, therefore
\begin{equation}
\begin{split}
    E(\lambda_1u_1+\lambda_2u_2;g)=
    G(\lambda_1u_1+\lambda_2u_2,y^*;g)\ge E_0(g),
\end{split}
\end{equation}
and $\lambda_1u_1+\lambda_2u_2\in C$.}
\end{proof}

\begin{lem}\label{lemmaUC0} For every $g\in M$ the map $u\mapsto E(u;g)$ is 
continuous, with respect to uniform convergence of $u$.
\end{lem}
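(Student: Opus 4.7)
The plan is to prove continuity by separately verifying the two one-sided inequalities $\limsup_{h\to\infty} E(u_h;g) \le E(u;g)$ and $\liminf_{h\to\infty} E(u_h;g) \ge E(u;g)$ along an arbitrary sequence $u_h\to u$ in $U$. Both ingredients are already essentially in hand: the upper bound follows from uniform convergence on the compact set $K$, and the lower bound follows from the $\Gamma$-liminf inequality of Prop.~\ref{SH8qXX} combined with the weak-$*$ compactness of $Y$. This is the standard ``$\Gamma$-convergence plus equicoercivity implies convergence of minima'' argument, specialized to our setting.

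For the limsup inequality, I would use Cor.~\ref{ZHSbDO} to select a minimizer $y^* \in Y$ of $G(u,\cdot;g)$, which in particular satisfies $T y^* = g$ and $Dy^*\in K$ almost everywhere. Then $y^*$ is admissible as a trial deformation for every $E(u_h;g)$, giving
\begin{equation}
    E(u_h;g) \le G(u_h,y^*;g) = \int_\Omega u_h(Dy^*(x))\,dx.
\end{equation}
Since $u_h\to u$ uniformly on $K$ and $|\Omega|<\infty$, the right-hand side converges to $\int_\Omega u(Dy^*(x))\,dx = G(u,y^*;g) = E(u;g)$, which yields $\limsup E(u_h;g)\le E(u;g)$.

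For the liminf inequality I would argue by extraction. For each $h$, invoke Cor.~\ref{ZHSbDO} to pick $y_h\in Y$ with $Ty_h=g$ and $E(u_h;g) = G(u_h,y_h;g)$. Given any subsequence, the weak-$*$ compactness of $Y$ supplies a further subsequence with $y_h\weakto y$ for some $y\in Y$; uniform convergence from the weak-$*$ convergence preserves $Ty=g$. Part (i) of Prop.~\ref{SH8qXX} then gives
\begin{equation}
    \liminf_{h\to\infty} E(u_h;g) = \liminf_{h\to\infty} G(u_h,y_h;g) \ge G(u,y;g) \ge E(u;g).
\end{equation}
Since every subsequence of $E(u_h;g)$ admits a further subsequence with this lower bound, the full sequence satisfies $\liminf E(u_h;g) \ge E(u;g)$. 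Combining the two inequalities yields continuity.

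The main obstacle is essentially bookkeeping rather than substance: ensuring that the recovery argument for the upper bound is legitimate (which is immediate because $y^*$ itself is a fixed admissible competitor, so no nontrivial recovery sequence is needed) and that the admissibility constraint $Ty=g$ passes to the weak-$*$ limit in the lower bound (which uses the compact embedding $W^{1,\infty}\hookrightarrow C^0$ already exploited in Prop.~\ref{SH8qXX}). No new ideas beyond Prop.~\ref{SH8qXX} and Cor.~\ref{ZHSbDO} are required.
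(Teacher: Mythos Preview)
Your proof is correct but takes a different route from the paper. The paper's argument is a one-line direct estimate: if $u\le u'+\delta$ pointwise on $K$ for some $\delta\in\R$, then for every admissible $y$ one has $G(u,y;g)\le G(u',y;g)+\delta|\Omega|$; taking the infimum over $y$ and then swapping $u$ and $u'$ yields
\[
    |E(u;g)-E(u';g)|\le |\Omega|\,\|u-u'\|_{L^\infty(K)},
\]
so $E(\cdot;g)$ is in fact Lipschitz with constant $|\Omega|$. Your argument instead routes through the $\Gamma$-convergence of Prop.~\ref{SH8qXX} together with Cor.~\ref{ZHSbDO} and weak-$*$ compactness of $Y$; this is legitimate and not circular (those results are established earlier and do not invoke the present lemma). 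The trade-off is that the paper's approach is shorter, uses nothing beyond the definition of $E$ as an infimum, and delivers a quantitative Lipschitz estimate---labelled~(\ref{esEuu1}) in the text---which is later quoted verbatim in the two-bar example (eq.~(\ref{jy2aCW})). Your approach is natural once the $\Gamma$-framework is in place, but it yields only sequential continuity without the explicit modulus, so strictly speaking it proves the lemma as stated but not the sharper inequality the paper extracts along the way.
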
 
\begin{proof}
Assume that $u$, $u'\in U$, with $u\le u'+\delta$ pointwise for some 
{$\delta\in\mathbb{R}$}. For every $y$, we have
\begin{equation}
    E(u;g)\le G(u,y;g)\le G(u'+\delta,y;g)=G(u',y;g)+\delta|\Omega|.
\end{equation}
Taking the infimum over all $y$ leads to
\begin{equation}\label{esEuu1}
    E(u;g)\le E(u';g)+\delta|\Omega|.
\end{equation}
The other inequality follows by swapping $u$ and $u'$. This proves that $E$ 
is Lipschitz continuous, with Lipschitz constant equal to the measure of 
$\Omega$. 
\end{proof}

\begin{thm}[Lower semicontinuity] \label{UCPPzW}
The cost function $J$, eq.~(\ref{caTe55}), is sequentially weak-$*$ lower 
semicontinuous on $U$. 
\end{thm}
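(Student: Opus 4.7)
The plan is to decompose $J$ via representation~\eqref{caTe55},
\[
    J(u) = \sup_{g\in M}\bigl(E(u;g) - E_0(g)\bigr) + I_C(u),
\]
and to show each summand is sequentially lower semicontinuous on $U$; since a sum of two lower semicontinuous functions is lower semicontinuous, this yields the claim. I first observe that every $u\in U$ is of the form $g\circ\mathrm{Minors}$ with a common $\ell$-Lipschitz $g:\R^{\tau(n)}\to[0,\infty)$, so the family $U$ is equi-Lipschitz on the compact set $K$. Consequently, on $U$ any weak-$*$ convergent sequence (regarded, say, in $L^\infty(K)$) automatically converges uniformly, by Arzelà--Ascoli together with uniqueness of the weak-$*$ limit. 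It therefore suffices to prove sequential lower semicontinuity with respect to uniform convergence on $U$.

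For the first summand, Lemma~\ref{lemmaUC0} gives that for every fixed $g\in M$ the map $u\mapsto E(u;g)$ is $|\Omega|$-Lipschitz with respect to uniform convergence on $U$, hence continuous; subtracting the constant $E_0(g)$ preserves continuity, and the pointwise supremum over $g\in M$ of a family of continuous functions is always lower semicontinuous. For the second summand, Proposition~\ref{uH5p92} asserts that $C$ is strongly closed, which is precisely the statement that $I_C$ is sequentially lower semicontinuous with respect to the same topology. Combining the two: for $u_h\to u$ in $U$, either $u\in C$, in which case $I_C(u)=0$ and lower semicontinuity of the first summand delivers $\liminf_h J(u_h)\ge \sup_{g\in M}(E(u;g)-E_0(g))=J(u)$; or $u\notin C$, in which case the openness of $U\setminus C$ forces $u_h\notin C$ eventually, so $J(u_h)=+\infty$ matches $J(u)=+\infty$.

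I expect no serious obstacle: both key ingredients, continuity of $E(\cdot;g)$ from Lemma~\ref{lemmaUC0} and closedness of $C$ from Proposition~\ref{uH5p92}, are already in hand, and the principle that the pointwise supremum of continuous functions is lower semicontinuous is standard. The only point requiring a line of clarification is reconciling the ``weak-$*$'' wording in the statement with the fact that $U$ was topologized by uniform convergence in A2; the equi-Lipschitz property of $U$ on the compact set $K$ identifies the two notions on $U$, so the argument transfers without change.
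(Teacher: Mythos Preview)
Your proof is correct and follows essentially the same route as the paper: continuity of $E(\cdot;g)$ from Lemma~\ref{lemmaUC0}, lower semicontinuity of the pointwise supremum, and the reduction of weak-$*$ to uniform convergence on $U$ via the equi-Lipschitz property. Your argument is in fact more explicit than the paper's, since you separately treat the indicator $I_C$ through Proposition~\ref{uH5p92}, a step the paper's proof leaves implicit.
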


\begin{proof}
Since $E(u;g)$ is continuous with respect to uniform convergence of $u$, the 
supremum $\sup_{g\in M} (E(u;g)-E_0(g))$ is lower semicontinuous with respect 
to the same topology. As the weak-* topology of $W^{1,\infty}$ is stronger 
than uniform convergence, this concludes the proof.
\end{proof}

\begin{cor}[Existence] \label{NQa6VY}
$J$ has a minimizer in $U$.
\end{cor}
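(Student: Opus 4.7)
The plan is to combine the compactness modulo constants of $U$ from Prop.~\ref{lemmaUclosed}(ii) with the sequential lower semicontinuity of $J$ from Thm.~\ref{UCPPzW}; the only genuine work will be to pin down the ambiguous additive constant along a minimizing sequence. First I would dispose of the trivial case $\inf_U J = +\infty$: since $U$ is nonempty (e.~g., $u\equiv 0$ arises from $g\equiv 0$, which is convex, non-negative and $\ell$-Lipschitz), every element of $U$ is a minimizer. Otherwise, I would fix a minimizing sequence $(u_h)\subset U$ with $J(u_h) \to \inf_U J$ and $J(u_h)\le C<\infty$, which forces $u_h\in C$ and hence $E(u_h;g)\ge E_0(g)$ for every $g\in M$.

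Applying Prop.~\ref{lemmaUclosed}(ii) produces constants $c_h$ and a subsequence (not relabeled) with $u_h-c_h\to \tilde u$ uniformly on $K$ for some $\tilde u\in U$. The main obstacle is to control $(c_h)$ so that the whole $u_h$ converges rather than only modulo constants. Choosing, as in the proof of Prop.~\ref{lemmaUclosed}(ii), $c_h=u_h(0)=g_h(0)$ where $u_h=g_h\circ{\rm Minors}$ with $g_h\ge0$ convex and $\ell$-Lipschitz, immediately gives $c_h\ge0$. For the upper bound, the $\ell$-Lipschitz property of $g_h$ yields
\begin{equation*}
    u_h(\xi) \ge c_h - \ell\,|{\rm Minors}(\xi)| \ge c_h - \ell M_0 \quad \text{for all } \xi\in K,
\end{equation*}
where $M_0:=\sup_{\xi\in K}|{\rm Minors}(\xi)|<\infty$ by compactness of $K$. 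Integrating over any fixed $y\in Y$ with $Ty=g_0$ for some fixed $g_0\in M$ gives $E(u_h;g_0)\ge (c_h-\ell M_0)|\Omega|$. Combining this with the cost bound $E(u_h;g_0)-E_0(g_0)\le J(u_h)\le C$ yields
\begin{equation*}
    c_h \le \ell M_0 + \frac{C+E_0(g_0)}{|\Omega|},
\end{equation*}
so $(c_h)$ is bounded.

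Extracting a further subsequence with $c_h\to c\ge 0$, one obtains $u_h\to u:=\tilde u+c$ uniformly on $K$. Since $U$ is closed under uniform limits (Prop.~\ref{lemmaUclosed}(i)) and under addition of a non-negative constant (the representation $(\tilde g+c)\circ {\rm Minors}$ preserves convexity, $\ell$-Lipschitz continuity and non-negativity of the generating function), one has $u\in U$. Finally, Thm.~\ref{UCPPzW} applied to the uniformly (hence weak-$*$) convergent sequence $u_h\to u$ gives
\begin{equation*}
    J(u) \le \liminf_{h\to\infty} J(u_h) = \inf_{U} J ,
\end{equation*}
proving that $u$ is a minimizer. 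The essential subtlety, as anticipated, lies in converting the compactness-modulo-constants of $U$ into genuine compactness along a minimizing sequence, which is achieved by the two-sided bound on $c_h$ extracted from the non-negativity of the densities in $U$ and from the finiteness of the cost along the sequence.
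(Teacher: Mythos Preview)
Your argument is correct and follows the same direct-method route as the paper (lower semicontinuity from Thm.~\ref{UCPPzW}, compactness from Prop.~\ref{lemmaUclosed}, then Tonelli). The paper's proof is a single sentence invoking these three ingredients, whereas you have made explicit the one genuinely non-trivial step that the paper leaves tacit: Prop.~\ref{lemmaUclosed}(ii) only yields compactness of $U$ \emph{modulo additive constants}, so along a minimizing sequence one must still pin down the constants $c_h=u_h(0)$. Your two-sided bound---$c_h\ge 0$ from the non-negativity of the generating function $g_h$, and $c_h\le \ell M_0+(C+E_0(g_0))/|\Omega|$ from the pointwise lower bound on $u_h$ combined with the finiteness of $J(u_h)$---is exactly what is needed. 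Once $c_h$ is bounded you correctly recover genuine uniform convergence $u_h\to u$ and conclude $u\in U$ via Prop.~\ref{lemmaUclosed}(i) applied directly to $u_h$; the parenthetical argument via $\tilde g+c$ is redundant but harmless. In short, you have written out the proof the paper's one-liner is gesturing at, and in doing so have closed a gap that the terse invocation of ``compactness of $U$'' leaves open.
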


\begin{proof}
Existence follows from theorem~\ref{UCPPzW}, compactness of $U$ proven in 
Prop.~\ref{lemmaUclosed}, and Tonelli's theorem \cite{Tonelli:1921}. 
\end{proof}

\subsection{Uniqueness and identifiability}

Next we turn to the question of uniqueness of the optimal controls. 

\begin{definition} [Separating property] \label{Lawj4j}
We say that the set $M$ of prescribed boundary displacements is {\sl 
separating} if, for every $u$, $v \in U$ such that $u \neq v$, there is $g\in 
M$ such that $E(u;g) \neq E(v;g)$. 
\end{definition}

\begin{example}[Affine boundary conditions]
Let 
\begin{equation}
    M = \{ \xi x \,:\, \xi \in K \} . 
\end{equation}
Then, by {polyconvexity} of $u$,
\begin{equation}
    E(u;\xi x) = |\Omega| \, u(\xi)
\end{equation}
{for any $u\in U$.} Suppose $u$, $v \in U$ and $u \neq v$. Then, there is 
$\xi \in K$ s.~t., $u(\xi) \neq v(\xi)$ and, therefore, $E(u;\xi x) \neq 
E(v;\xi x)$. Hence, $M$ is separating. By the same token, 
\begin{equation}
    M = \{ \xi x \,:\, \xi \in K \cap \mathbb{Q}^{n\times n} \} ,
\end{equation}
with $\mathbb{Q}$ the field of rational numbers, defines a {\sl countable} 
separating set of prescribed boundary displacements. \hfill$\square$
\end{example}

\begin{prop}[Uniqueness]\label{propunique}
Suppose that there is $w \in U$ such that $E_0(g) = E(w;g) = \inf  
G(w,\,\cdot; g)$. Suppose that $M$ is separating. Then, $J$ has a unique 
minimizer in $U$. 
\end{prop}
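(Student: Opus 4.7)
The plan is to exhibit $w$ itself as a minimizer with cost zero, and then use the separating property to force any other minimizer to agree with $w$ on all of $M$, hence to equal $w$.

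First I would observe that $J\ge 0$ on $U$: if $u\in C$ then $E(u;g)\ge E_0(g)$ for every $g\in M$, so each quantity $E(u;g)-E_0(g)$ is nonnegative and the supremum defining $J$ in \eqref{caTe55} is nonnegative; if $u\notin C$ then $I_C(u)=+\infty$. Next I would check that the given $w$ attains $J(w)=0$: by hypothesis $E(w;g)=E_0(g)$ for every $g\in M$, so $w\in C$ and $\sup_{g\in M}\bigl(E(w;g)-E_0(g)\bigr)=0$. Therefore $\min_{u\in U} J(u)=0$.

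Now let $u^\ast\in U$ be any minimizer, so that $J(u^\ast)=0$. Since $J(u^\ast)<+\infty$ we have $u^\ast\in C$, and thus $E(u^\ast;g)-E_0(g)\ge 0$ for every $g\in M$. Combined with
\begin{equation}
\sup_{g\in M}\bigl(E(u^\ast;g)-E_0(g)\bigr)=0,
\end{equation}
this forces $E(u^\ast;g)=E_0(g)=E(w;g)$ for every $g\in M$. The separating property of $M$ (Definition~\ref{Lawj4j}) then rules out $u^\ast\ne w$, so $u^\ast=w$, giving uniqueness.

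The argument is essentially a repackaging of the definitions; the only potentially subtle point is making sure one uses both the constraint $u^\ast\in C$ and the vanishing of the supremum to conclude pointwise equality of energies on $M$, rather than merely an inequality, before invoking Definition~\ref{Lawj4j}. Existence of the minimizer itself is not needed here since $w$ is produced by hand, but it is consistent with Corollary~\ref{NQa6VY}.
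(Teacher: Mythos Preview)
Your proof is correct and follows essentially the same approach as the paper. Both arguments establish $\inf J=J(w)=0$, then use that $J(u^\ast)=0$ forces $E(u^\ast;g)=E_0(g)=E(w;g)$ for every $g\in M$, and invoke the separating property; the only cosmetic difference is that the paper phrases the last step as a contradiction between two hypothetical minimizers $u\neq v$, whereas you show directly that every minimizer equals $w$.
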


\begin{proof}
{We first observe that $\inf J=J(w)=0$.} Suppose that $J(u) = J(v) = \inf J = 
0$ and $v\neq u$. Then, there is $g \in M$ such that $E(u;g) \neq E(v;g)$. 
Suppose that $E(v;g) < E(u;g)$. {From $J(u)=0$ we obtain $E(u;g)=E(w;g)$, 
therefore} $E(v;g) < E(w;g)$ and $I(v;g) = +\infty$, in contradiction with 
the assumption that $v$ is a minimizer. Hence, the minimizer is unique.  
\end{proof}

\begin{cor}[Identifiability] \label{D94WVn}
Suppose that there is $w \in U$ such that $E_0(g) = E(w;g) = \inf  
G(w,\,\cdot; g)$. Suppose that $M$ is separating and let $u_*$ be the unique 
minimizer of $J$. Then, $u_* = w$. 
\end{cor}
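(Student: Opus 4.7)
The plan is to verify directly that the proposed ground truth $w$ is itself a minimizer of $J$, and then invoke Proposition \ref{propunique} to conclude that the unique minimizer $u_*$ must equal $w$.

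First, I would check that $w$ belongs to the constraint set $C$ defined in \eqref{YZta6C}. Since by hypothesis $E_0(g) = E(w;g)$ for every $g \in M$, the inequality $E(w;g) \geq E_0(g)$ is satisfied trivially (with equality), so $w \in C$.

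Next, I would evaluate the cost at $w$. Because $w \in C$, the indicator term $I_C(w)$ in \eqref{caTe55} vanishes, and the cost reduces to
\begin{equation}
    J(w) = \sup_{g\in M} \bigl( E(w;g) - E_0(g) \bigr) = \sup_{g\in M} 0 = 0.
\end{equation}
On the other hand, for any $u \in U$ we have $J(u) \geq 0$: if $u \in C$ then each term $E(u;g) - E_0(g)$ is nonnegative, and otherwise $J(u) = +\infty$. Consequently $\inf_{u\in U} J(u) = 0$ and $w$ attains this infimum.

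Finally, the hypotheses of Proposition \ref{propunique} are in force (the existence of such a $w$ and the separating property of $M$ are precisely what is assumed), so $J$ has a unique minimizer in $U$. Since $u_*$ is stated to be the unique minimizer and $w$ is also a minimizer, we conclude $u_* = w$. There is essentially no obstacle in this argument; the only substantive step is the observation that $J(w) = 0$, and the rest is bookkeeping that relies entirely on the previously established uniqueness statement.
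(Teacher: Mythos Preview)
Your proof is correct and follows essentially the same approach as the paper: show that $J\geq 0$, verify $J(w)=0$ so that $w$ is a minimizer, and then appeal to the uniqueness established in Proposition~\ref{propunique} to conclude $u_*=w$. Your version is slightly more explicit (checking $w\in C$ and using the indicator formulation~\eqref{caTe55}), but the underlying argument is identical.
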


\begin{proof}
From (\ref{VgsvM1}) and (\ref{ptMeIB}) it follows that $J \geq 0$. In 
addition $J(w) = 0$ and, hence, $w$ is a minimizer. By uniqueness, $u_* = w$. 
\end{proof}

It thus finally follows that the problem (\ref{SC3dBH}) indeed identifies 
variationally the energy density of the solid being tested, provided that the 
experimental program is thorough enough that the prescribed boundary 
displacements be separating.  

\begin{rem}{\rm
If the underlying energy density $w$ is not quasiconvex, then one can only 
hope to identify the relaxed energy $Qw$, given by the quasi-convexification 
of $w$. A rigorous statement is however difficult, due to the presence of the 
determinant constraint in the definition of $U$. Indeed, relaxation theory 
with determinant constraints is only partially known. We remark that in the 
scalar case, where quasiconvexity {and polyconvexity coincide with} 
convexity, the situation is much simpler.} 
\end{rem}

\subsection{Illustrative examples}

We illustrate the essential tradeoffs at play by means of the following 
elementary examples. 

\subsubsection{One bar} \label{UMGK5s} 
Consider a bar of length $L$ occupying the domain $\Omega=[0,L]$, subject to 
boundary conditions 
\begin{equation}
    y(0) = 0, \quad y(L) = g .
\end{equation}
For every $y\in Y$, we have 
\begin{equation}
    |y(L)| 
    = 
    \Big| \int_0^L Dy(x) \, dx \Big|
    \leq
    \int_0^L | Dy(x) | \, dx
    \leq
    C_1 L .
\end{equation}
As in one dimension the condition on the determinant reduces to $y'\ge0$, we 
also obtain $y(L)\ge0$. For $\xi\in [0,C_1]$, let $y(x) = \xi x$. Assume 
$C_0\ge C_1L$. Then, $Dy = \xi$, $y\in Y$ and $y(L) =\xi L$. Hence, $TY = [0, 
C_1L]$ and $K= [0, C_1]$. Take $M \subset TY$. Then, for every $g \in M$, we 
have 
\begin{equation}
    E(u;g) = L \, u(g/L) ,
    \quad
    E_0(g) = L \, w(g/L) .
\end{equation}
From (\ref{VgsvM1}) 
\begin{equation}
    I(u;g)
    =
    \left\{
        \begin{array}{ll}
            {L \, u(g/L) - L \, w(g/L)}  ,
            & \text{if} \quad L \, w(g/L) - L \, u(g/L) \leq 0 , \\
            +\infty , & \text{otherwise} .
        \end{array}
    \right.
\end{equation}
The cost function $J(u)$ then follows from (\ref{ptMeIB}) as 
\begin{equation}
    J(u)
    =
    \left\{
        \begin{array}{ll}
            \operatorname{sup} \{ u(\xi) - w(\xi)  \,:\, \xi L \in M\} ,
            & \text{if} \; u \geq w \; \text{on} \; M/L , \\
            +\infty , & \text{otherwise} .
        \end{array}
    \right.
\end{equation}
The minimizers satisfy 
\begin{equation}
    u(\xi) = w(\xi) , \quad \xi \in M/L ,
\end{equation}
and are arbitrary functions with $\operatorname{Lip}(u)\leq \ell$ elsewhere 
in $K$. Thus $w$ is identified uniquely in $M/L$ {and, by continuity, on its 
closure}---albeit not elsewhere in $K$---by the minimizers of $J$. 
Furthermore, the minimizer $u$ is unique if $\overline M = K L$, in which 
case $M$ is separating. 

\subsubsection{Two bars in parallel} \label{9AqoKC} 

The case of two bars in parallel provides an example in which uniform 
deformations generated by affine boundary conditions are not available. 
Suppose that the lengths of the bars are $L_1$, $L_2$, the cross-sectional 
areas $A_1$ and $A_2$, respectively. The bars are pinned on one end, jointed 
at the other end, and deform under the action of a prescribed displacement 
$\delta$ applied to the jointed end. For a trial energy density per unit 
volume $u \in U$, the corresponding minimum energy is 
\begin{equation} \label{z8ezZq}
\begin{split}
    &
    E(u;\delta)
    = 
    \inf
    \{
        A_1 \int_0^{L_1} u(Dy_1(x)) \, dx 
        + 
        A_2 \int_0^{L_2} u(Dy_2(x)) \, dx
        \,:\, \\ & \qquad\qquad
        y_1(0) = y_2(0) = 0 ;\;
        y_1(L_1) = L_1+\delta,\; y_2(L_2) = L_2+\delta
    \}
    = \\ &\qquad\qquad
    A_1 L_1 u(1+\delta/L_1) + A_2 L_2 u(1+\delta/L_2) .
\end{split}
\end{equation}

We begin by ascertaining conditions on $M$ for separability. Let $K = [k_{\rm min},k_{\rm max}]$, $0<k_{\rm min}<1$, $k_{\rm max}>1$. Assume $L_1 \neq L_2$. {The set of possible displacements $\delta$ is characterized by $1+\delta/L_1\in K$ and $1+\delta/L_2\in K$, which means that 
\begin{equation}
    \delta\in M
    =
    [(k_{\rm min}-1)\max(L_1,L_2),(k_{\rm max}-1)\min(L_1,L_2)].
\end{equation}
Since $(k_{\rm min}-1)\max(L_1,L_2)<0<(k_{\rm max}-1)\min(L_1,L_2)$, this set has nonempty interior.} Since $E(u;\delta)$ depends linearly on $u$, to prove that $M$ is separating it suffices to show that $E(w;\delta)=0$ for all $\delta\in M$ and $w=v-u$, $u,v \in U$, implies that $w=0$. Assume $A_1=A_2$, for simplicity. Then, it follows from (\ref{z8ezZq}) that $E(w;\delta)=0$ if 
\begin{equation} \label{QgF7Rj}
    L_1 w(1+\delta/L_1) + L_2 w(1+\delta/L_2) = 0.
\end{equation}
Fix $\delta\in M$ and let $\beta = \delta/\min(L_1,L_2)$. Then, 
(\ref{QgF7Rj}) becomes 
\begin{equation} \label{yA6svg}
    w(1+\lambda \beta) + \lambda w(1+\beta) = 0
\end{equation}
where
\begin{equation}
    \lambda 
    = 
    \frac{\min(L_1,L_2)}{\max(L_1,L_2)}
    <
    1 .
\end{equation}
{Since $M$ is an interval containing zero, $\delta\in M$ implies $\lambda 
\delta \in M$.} Iterating (\ref{yA6svg}) we obtain 
\begin{equation} 
    w(1+\beta) = (-\lambda)^{-n} w(1+\lambda^n \beta) ,
\end{equation}
or, taking absolute values,
\begin{equation} 
    | w(1+\beta) | = \lambda^{-n} \, |  w(1+\lambda^n \beta) | .
\end{equation}
Suppose further that, in a neighborhood of $\xi=1$, 
\begin{equation}\label{eqasspgr}
    |  w(\xi) | \leq C \, | \xi - 1 |^p ,
\end{equation}
for some $C>0$, $p>1$. Then,
\begin{equation} 
    | w(1+\beta) | 
    \leq 
    \lim_{n\to\infty} C \, |\beta|^p \lambda^{n (p-1)} 
    =
    0 ,
\end{equation}
whence we conclude that $M$ is separating.

\section{Approximation}

The preceding variational framework can be exploited to formulate 
approximation schemes with provable convergence properties. In this section 
we consider successive approximations to material identification problems. We 
show that, under natural assumptions, subsequences of  constrained optima 
indeed converge to the sought 'ground truth' energy density. 

\subsection{A Galerkin scheme}

We consider approximations obtained by restriction of $J(u)$ to dense 
subspaces of energy densities, i.~e., by minimizing 
\begin{equation} \label{5MgEnG}
    J_h(u)
    =
\begin{cases}
    J(u), & \text{\rm if } u \in U_h, \\
    + \infty, & \text{\rm otherwise,}
\end{cases}
\end{equation}
over some suitable sequence of closed subspaces $U_h \subset U$. Next we show 
that the convergence of the corresponding approximations can be characterized 
in terms of $\Gamma$-convergence \cite{Conti:2007}. 

We denote by $(U,T)$ the space $U$ endowed with its strong topology $T$ and a 
by $(U,S)$ the space $U$ endowed with its weak topology $S$. We additionally 
denote by $\to$ strong convergence and by $\rightharpoonup$ weak convergence. 
{In the current setting, both are metrizable.}

\begin{definition}[Density]
We say that a sequence $(U_h)$ has the density property if for every $u \in 
U$, there is a sequence $(u_h)$, with $u_h \in U_h$, such that $u_h \to u$ 
{in the strong topology of $U$}. 
\end{definition}

In the context of neural-network approximation, dense sequences of spaces are
said to have a {\sl universal approximation property}. 

\begin{prop} [$\Gamma$-convergence] \label{KzmpW7}
Let $J: U \to \overline{\mathbb{R}}$ be coercive in $(U,S)$ and continuous in 
$(U,T)$. Let $(U_h)$ be a sequence of closed subspaces of $U$ with the 
density property, and $(J_h): U \to \overline{\mathbb{R}}$ be the sequence 
(\ref{5MgEnG}). Then the sequence $(J_h)$ $\Gamma$-converges to ${\rm sc}^-J$ 
in $(U,S)$ and is equicoercive in $(U,S)$. 
\end{prop}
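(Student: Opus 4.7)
The plan is to verify the two halves of $\Gamma$-convergence in the weak topology $(U,S)$ separately, and then read off equicoercivity from the pointwise bound $J_h \ge J$. Throughout I will use that $S$ is metrizable (noted in the paper), so that sequential arguments suffice.

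For the liminf inequality, take any $u_h \rightharpoonup u$ in $(U,S)$ and prove $\liminf_h J_h(u_h) \ge {\rm sc}^- J(u)$. Only the case of finite liminf matters; along a realizing subsequence, finiteness forces $u_h \in U_h$ (otherwise $J_h(u_h) = +\infty$), so $J_h(u_h) = J(u_h)$. The sequential characterization \eqref{defRel} of ${\rm sc}^- J$ in $(U,S)$ applied directly to this sequence yields ${\rm sc}^- J(u) \le \liminf_h J(u_h) = \liminf_h J_h(u_h)$, closing this direction.

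The recovery-sequence inequality is the substantive step. Given $u \in U$, \eqref{defRel} produces $v_k \rightharpoonup u$ in $S$ with $J(v_k) \to {\rm sc}^- J(u)$. For each fixed $k$, the density hypothesis supplies $w_{k,h} \in U_h$ with $w_{k,h} \to v_k$ in the strong topology $T$; since $T$ is finer than $S$ and $J$ is $T$-continuous, both $w_{k,h} \rightharpoonup v_k$ in $S$ and $J(w_{k,h}) \to J(v_k)$ as $h \to \infty$. Using the metrizability of $S$, a standard diagonal extraction produces indices $k(h) \to \infty$ and a sequence $u_h = w_{k(h),h} \in U_h$ with $u_h \rightharpoonup u$ in $S$ and $J(u_h) \to {\rm sc}^- J(u)$. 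Since $u_h \in U_h$, $J_h(u_h) = J(u_h)$, whence $\limsup_h J_h(u_h) \le {\rm sc}^- J(u)$.

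Equicoercivity is then immediate: since $J_h = J$ on $U_h$ and $J_h = +\infty$ otherwise, one has $J_h \ge J$ on all of $U$, so $\{J_h \le t\} \subset \{J \le t\}$ for every $t \in \R$ and every $h$, and the $(U,S)$-coercivity of $J$ passes to the family $(J_h)$ uniformly. The main obstacle is the recovery-sequence step, specifically the double passage to the limit in $k$ and $h$: one must simultaneously convert the strong convergence $w_{k,h} \to v_k$ into a weak convergence $u_h \rightharpoonup u$ and transfer the energy limit $J(w_{k,h}) \to J(v_k) \to {\rm sc}^- J(u)$ through the diagonal, both of which are secured by metrizability of $S$ together with a standard Attouch-style selection of $k(h)$.
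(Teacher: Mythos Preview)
Your proof is correct and follows essentially the same three-step structure as the paper: a $\liminf$ inequality via the pointwise bound $J_h \ge J$ and the sequential characterization \eqref{defRel}, a recovery sequence built by first approximating $u$ in $S$ by $v_k$ realizing ${\rm sc}^-J(u)$, then approximating each $v_k$ in $T$ by elements of $U_h$, and finally diagonalizing; equicoercivity from $J_h \ge J$. The only cosmetic difference is that for the $\liminf$ step the paper bypasses your finite-liminf case split and subsequence extraction by invoking $J_h \ge J$ directly for \emph{every} $h$, which immediately gives $\liminf_h J_h(u_h) \ge \liminf_h J(u_h) \ge {\rm sc}^-J(u)$ without any reduction to $u_h \in U_h$.
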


\noindent 
\begin{proof} 
i) $\liminf$-inequality. Let $u\in U$, and $u_h\to u$ with respect to $S$. 
Since $J_h \geq J$ on $U$ for every $h \in \mathbb{N}$, we have 
\begin{eqnarray}
    {\rm sc}^- J(u)
    \le
    \liminf_{h\to\infty} J(u_h)
    \le
    \liminf_{h\to\infty} J_h(u_h) \,,
\end{eqnarray}
as required. 

ii) $\limsup$-inequality. Let $u\in U$. By (\ref{defRel}) there is a sequence 
$u^k\rightharpoonup u$ in $S$ such that 
\begin{equation}
    \lim_{k\to\infty} J(u^k)
    =
    {\rm sc}^-J(u) \,.
\end{equation}
By the density of the sequence $U_h$, for any $k$ there is a sequence 
$u_h^k\in U_h$, with $u_h^k\to u^k$ with respect to $T$. Since $J$ is 
continuous in the same topology, and $J_h=J$ on $U_h$, 
\begin{equation}
    \lim_{h\to\infty} J_h(u_h^k)
    =
    \lim_{h\to\infty} J(u_h^k) = J(u^k) \,.
\end{equation}
The $\limsup$ inequality then follows by passing to a diagonal subsequence. 

iii) Equicoercivity. Since, by construction, $J_h\ge J$, the equicoercivity 
of the sequence is immediate from the coercivity of $J$. 
\end{proof}

As noted earlier, $\Gamma$-convergence and equicoercivity imply that the 
minimizers of $J$ are accumulation points of minimizing sequences of the 
family $J_h$, i.~e., if $J_h(u_h)=\inf J_h$ then the sequence $u_h$ has a 
subsequence that converges weakly to a minimizer of $J$. 

We also note that the existence of minimizers $u_h$ of $J_h$ follow 
immediately from the closedness of $U_h$, which ensures that $J_h$ inherits 
the lower-semicontinuity of $J$, and definition (\ref{5MgEnG}), which ensures 
that $J_h$ inherits uniformly the coercivity of $J$ (equicoercivity). 

\subsection{Approximation by maxout neural networks}

We specialize the preceding approximation scheme to sequences $(U_h)$ 
generated by {\sl maxout} neural networks. This choice of approximating 
energy functions is suggested by the following structural properties of $U$. 

\begin{prop}[Lattice structure of $U$] \label{DMzS7G}
Let $u_i \in U$, $i\in I$. Assume that $u = \bigvee_{i\in I} u_i$ is not 
identically $+ \infty$. Then $u\in U$. 
\end{prop}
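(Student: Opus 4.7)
The plan is to pull everything back to the convex Lipschitz representatives on $\R^{\tau(n)}$, take their pointwise supremum there, and check that the result still belongs to the class required by assumption A2.

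By A2, for each $i\in I$ we may fix a convex $\ell$-Lipschitz function $g_i:\R^{\tau(n)}\to[0,\infty)$ such that $u_i(\xi)=g_i({\rm Minors}(\xi))$ on $K$. I would then define $g:\R^{\tau(n)}\to[0,\infty]$ by $g:=\bigvee_{i\in I} g_i$. Three properties are then immediate: nonnegativity, since each $g_i\ge0$; convexity, since the pointwise supremum of convex functions is convex; and the $\ell$-Lipschitz estimate, since for any fixed $m,m'\in\R^{\tau(n)}$ and any $i$ one has $g_i(m)\le g_i(m')+\ell|m-m'|\le g(m')+\ell|m-m'|$, so taking the supremum over $i$ on the left (and exchanging the roles of $m$ and $m'$) gives $|g(m)-g(m')|\le\ell|m-m'|$ wherever both values are finite.

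The only substantive point, and the step I expect to require the hypothesis that $u=\bigvee u_i$ is not identically $+\infty$, is to show that $g$ is everywhere finite. Pick $\xi_0\in K$ with $u(\xi_0)<+\infty$; then $M_0:=\sup_i g_i({\rm Minors}(\xi_0))=u(\xi_0)<+\infty$. For an arbitrary $m\in\R^{\tau(n)}$ and every $i$,
\begin{equation}
g_i(m)\le g_i({\rm Minors}(\xi_0))+\ell\,|m-{\rm Minors}(\xi_0)|\le M_0+\ell\,|m-{\rm Minors}(\xi_0)|,
\end{equation}
so taking the supremum over $i$ shows $g(m)<+\infty$. Hence $g:\R^{\tau(n)}\to[0,\infty)$ is convex and $\ell$-Lipschitz, and the preceding Lipschitz inequality now holds unconditionally.

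Finally, for every $\xi\in K$,
\begin{equation}
u(\xi)=\bigvee_{i\in I} u_i(\xi)=\sup_{i\in I} g_i({\rm Minors}(\xi))=g({\rm Minors}(\xi)),
\end{equation}
so $u$ admits the representation \eqref{eqdefuqc} with the convex, $\ell$-Lipschitz, nonnegative function $g$, and therefore $u\in U$. The only place where the assumption that $u\not\equiv+\infty$ is used is in the finiteness step; without it one cannot rule out the degenerate case where the $g_i(0)$ are unbounded, in which case the sup would fail to land in $[0,\infty)$.
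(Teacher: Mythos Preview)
Your proof is correct and follows essentially the same route as the paper: pull back to the convex $\ell$-Lipschitz representatives $g_i$ on $\R^{\tau(n)}$, take their pointwise supremum $g$, use the hypothesis $u\not\equiv+\infty$ together with the uniform Lipschitz bound to show $g$ is finite everywhere, and then check convexity and the Lipschitz estimate. Your Lipschitz argument (bounding $g_i(m)\le g(m')+\ell|m-m'|$ and taking the supremum on the left) is in fact slightly cleaner than the paper's $\epsilon$-version, and you explicitly verify nonnegativity of $g$, which the paper leaves implicit.
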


\begin{proof}
By the definition of $U$ there are convex, $\ell$-Lipschitz functions
$g_i:\R^{\tau(n)}\to\R$ such that $u_i=g_i\circ{\rm Minors}$. As $u$ is not 
identically $+\infty$ there is a point $\xi_0\in K$ such that $u_i(\xi_0)$ is 
uniformly bounded from above, therefore the sequence $g_i$ is uniformly 
bounded from above on the point $m_0={\rm Minors}(\xi_0)$. As the functions 
$g_i$ are uniformly Lipschitz, they are locally uniformly bounded from above. 
Therefore $g=\bigvee_{i\in I} g_i$ defines a function $g:\R^{\tau(n)}\to\R$. 
Obviously $u=g\circ{\rm Minors}$. 

Let $\xi$, $\xi'\in \R^{\tau(n)}$. For any $\epsilon>0$ there is 
$i\in I$ with $g(\xi)<g_i(\xi)+\epsilon$. Then
\begin{equation}
 g(\xi)-g(\xi')\le g_i(\xi)+\epsilon -g(\xi')
 \le  \ell |\xi-\xi'|+\epsilon + g_i(\xi')-g(\xi')
 \le \ell |\xi-\xi'|+\epsilon.
\end{equation}
As $\epsilon$ was arbitrary,  the same holds for $\epsilon=0$. Since $\xi$ 
and $\xi'$ were arbitrary, this proves that $g$ is $\ell$-Lipschitz. 

Finally, as the supremum of convex functions is convex, $g$ is convex.
Hence, $u \in U$, as advertised. 
\end{proof}

Recall that a function $f:\mathbb{R}^{n\times n}\to\mathbb{R}$ is said to be 
polyaffine if $f$ and $-f$ are  polyconvex, see 
\cite[Def.~1.5]{Dacorogna:2007}. Similar concepts exist for quasiconvexity 
and rank-one convexity. We remark that a function is quasiaffine if and only 
if it is polyaffine (see for example 
\cite[Th.~6.1]{ContiDolzmannKirchheimMueller2006}). {In turn, this is 
equivalent to $f$ being an affine function of the minors, so that 
$f=g\circ{\rm Minors}$ with $g:\R^{\tau(n)}\to\R$ affine.} 

In view of the lattice property of $U$, Prop.~\ref{DMzS7G},
we specifically consider sequences of approximating spaces of the form
\begin{equation} \label{2F7dmS}
    U_h 
    = 
    \{ 
        u \in U \,:\, 
        u = \bigvee_{i=1}^{N_h} f_i ,\;
        f_i \in U,\; \text{polyaffine},\; i=1,\dots,N_h
    \} .
\end{equation}
We note that the spaces $U_h$ are finite-dimensional and, hence, closed in 
$U$. {Indeed, to characterize each $u\in U_h$ it is sufficient to 
characterize at most $N_h$ polyaffine functions $f_i$, with $\{u=f_i\}\cap K$ 
nonempty for all $i$. This, together with the fact that $f_i\in U$, implies 
that each $f_i$ can be characterized by finitely many real coefficients, 
which have a uniform bound (depending on a uniform bound on $u$).} 
\begin{rem}[Neural network interpretation]\label{7mp7wM}{\rm 
We note that any $u_h \in U_h$ may be regarded as a neural network in which 
the affine functions $f_i$ are the neurons and $\bigvee$ is the activation 
function. This type of neural network has been studied, e.~g., in 
\cite{Goodfellow:2013, Anil:2019}, where they are referred to as {\sl maxout} 
networks. We note that multilayer neural networks can also be defined by 
iterating (\ref{2F7dmS}). However, the multilayer neural networks thus 
defined reduce to a single layer neural network of the form (\ref{2F7dmS}) by 
the associativity of the wedge activation function. This reduction has the 
convenient consequence that no regard needs to be taken of the 'topology' of 
the neural networks, as they all reduce to single layer networks. 
\hfill$\square$} 
\end{rem}

\begin{prop}[Density] \label{P8bAkX}
The sequence $(U_h)$ defined in (\ref{2F7dmS}) has the density property. 
\end{prop}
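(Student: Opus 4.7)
The plan is to exploit two classical facts: every convex Lipschitz function on $\R^{\tau(n)}$ equals the pointwise supremum of its affine tangents, and compactness of the range of the minors map reduces such a sup to a finite one with controlled error.

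Given $u\in U$ with representation $u=g\circ{\rm Minors}$, where $g:\R^{\tau(n)}\to[0,\infty)$ is convex and $\ell$-Lipschitz, the set $M^\star:={\rm Minors}(K)$ is compact as the continuous image of the compact set $K$. For fixed $\epsilon>0$, I would cover $M^\star$ by finitely many open balls $B(m_i,r)$, $i=1,\dots,N$, of radius $r=\epsilon/(2\ell)$.

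At each centre $m_i$, convexity and the $\ell$-Lipschitz property of $g$ guarantee a subgradient $p_i\in\partial g(m_i)$ with $|p_i|\le\ell$, and I would define the affine tangent $a_i(m)=g(m_i)+p_i\cdot(m-m_i)$. Setting $f_i:=a_i\circ{\rm Minors}$, each $f_i$ is polyaffine (affine in the minors) with Lipschitz constant $\le\ell$, and hence belongs to $U$ (interpreted up to an additive constant, as reflected by the modulo-constants compactness of Prop.~\ref{lemmaUclosed}). The candidate approximation is then $u_\epsilon:=\bigvee_{i=1}^N f_i$, which lies in $U_h$ as soon as $N_h\ge N$.

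For the error estimate, fix $\xi\in K$ and let $m:={\rm Minors}(\xi)\in M^\star$. The subgradient inequality $a_i\le g$ on $\R^{\tau(n)}$ gives $u_\epsilon(\xi)\le u(\xi)$. Choosing $i$ with $|m-m_i|<r$, the Lipschitz bounds on $g$ and $a_i$ yield
\[
u(\xi)-u_\epsilon(\xi)\le g(m)-a_i(m)=\bigl(g(m)-g(m_i)\bigr)-p_i\cdot(m-m_i)\le 2\ell r=\epsilon,
\]
so $\|u-u_\epsilon\|_{\infty,K}\le\epsilon$, which gives $u_\epsilon\to u$ strongly in the uniform topology as $\epsilon\to 0$. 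The principal delicate point is that a generic affine tangent $a_i$ may dip below zero on $M^\star$ even though $g\ge0$, so the atoms $f_i$ strictly lie in $U$ only up to a constant shift; this is reconciled by the quotient viewpoint on $U$ already implicit in Prop.~\ref{lemmaUclosed}, or explicitly by shifting all tangents by a common constant $c\ge0$ chosen so that $a_i+c\ge0$ on $M^\star$ simultaneously for every $i$.
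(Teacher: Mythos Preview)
Your argument is correct and essentially identical to the paper's: both take finitely many affine tangents to the convex lift $g$ on a compact set and compose with $\mathrm{Minors}$, the only cosmetic difference being that you cover $\mathrm{Minors}(K)\subset\R^{\tau(n)}$ directly while the paper covers $K\subset\R^{n\times n}$ and carries an extra factor $\mathrm{Lip}(\mathrm{Minors}|_K)$ in the error bound. Your remark on nonnegativity of the atoms $f_i$ is apt---the paper's own proof passes over this point entirely---but your proposed uniform shift by a fixed $c>0$ would move the limit from $u$ to $u+c$, so the cleanest reading is that the codomain $[0,\infty)$ for $g$ in (A2) is inessential and could be relaxed to $\R$ without affecting any argument in the paper.
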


\begin{proof}
Fix a function $u\in U$.
Let $g:\mathbb R^{\tau(n)}\to\R$ be convex, $\ell$-Lipschitz, and such that 
$u=g\circ\mathrm{Minors}$ on $K$.

For every $\xi\in K$, let $h_\xi$ denote an affine map which is tangent to 
$g$ in the point $m=\mathrm{Minors}(\xi)\in\R^{\tau(n)}$ (this is unique if 
$g$ is differentiable at that point).  The function $h_\xi$ is automatically 
$\ell$-Lipschitz, and $h_\xi\le g$ everywhere, therefore we can define a 
function $u_\xi=h_\xi\circ\mathrm{Minors}\in U$ with $u_\xi\le u$. 

Fix a sequence $\delta_h\to0$. For every $h$ one can choose finitely many 
points $\xi_1,\dots, \xi_{N_h}\in K$ such that for any $\xi\in K$ there is 
$i$ with $|\xi-\xi_i|<\delta_h$ (this corresponds to the condition 
$K\subseteq \cup_i B_{\delta_h}(\xi_i)$). Then 
\begin{equation}
    u_h:=\bigvee_{i=1}^{N_h} u_{\xi_i} \in U_h
\end{equation}
and $u_h\le u$ on $K$. We observe that these functions are uniformly 
Lipschitz. Indeed, the functions $h_{\xi_i}$ are $\ell$-Lipschitz, and the 
polynomial  $\mathrm{Minors}:K\to \R$ is Lipschitz on the bounded set $K$. 
Therefore for any $\xi\in K$, choosing $\xi_i$ as above, one has 
\begin{equation}
    |u_h(\xi)-u(\xi)|\le
(\mathrm{Lip}(u_h)+\mathrm{Lip}(u))|\xi-\xi_i|
\le 2\ell \mathrm{Lip}(\mathrm{Minors}|_K) \delta_h.
    \end{equation}
This proves that $u_h\to u$ uniformly.
\end{proof}

The following corollary now follows immediately from Props.~\ref{P8bAkX} 
and~\ref{KzmpW7} and the properties of $\Gamma$-convergence.

\begin{cor}[Convergence of maxout neural network approximations]
Let $(U_h)$ be as in (\ref{2F7dmS}) and $(J_h)$ as in (\ref{5MgEnG}). 
Let $(u_h)$ be a sequence of minimizers of $(J_h)$ in $U$. 
Then, there is $u \in U$ such that $u_h \rightharpoonup u$. 
\end{cor}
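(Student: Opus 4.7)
The plan is to combine Proposition~\ref{KzmpW7} with the density result of Proposition~\ref{P8bAkX}, and then to apply the standard convergence-of-minimizers part of $\Gamma$-convergence, namely Theorem~\ref{HV42Sq}(ii). Since the statement only asserts the existence of a limit $u\in U$, we should read it as the existence of a weakly convergent subsequence; we may also harmlessly assume the normalization $u_h(0)=0$, since adding a constant to $u_h$ merely shifts $J(u_h)$ by a fixed multiple of $|\Omega|$ and preserves the property of being a minimizer of $J_h$.

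First I would verify that the abstract hypotheses of Proposition~\ref{KzmpW7} are satisfied by the specific cost functional $J$ defined by~(\ref{caTe55}). Coercivity of $J$ in $(U,S)$ is essentially automatic: once normalized, all of $U$ is weakly-$*$ sequentially compact by Proposition~\ref{lemmaUclosed}(ii), so every sublevel set of $J$ is precompact in $S$. Continuity of $J$ in $(U,T)$ on the effective domain $C$ follows from Lemma~\ref{lemmaUC0}, which gives $|E(u;g)-E(u';g)|\le|\Omega|\,\|u-u'\|_\infty$ uniformly in $g$, so that $u\mapsto\sup_{g\in M}(E(u;g)-E_0(g))$ is $|\Omega|$-Lipschitz in the uniform topology; together with the strong closedness of $C$ from Proposition~\ref{uH5p92} this yields the lower semicontinuous envelope structure required in Proposition~\ref{KzmpW7}.

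Second, having checked the hypotheses, Proposition~\ref{KzmpW7} together with the density of $(U_h)$ from Proposition~\ref{P8bAkX} yields $J_h\;\Gamma$-$\to\;\mathrm{sc}^- J$ in $(U,S)$ with equicoercivity. I would then apply Theorem~\ref{HV42Sq}(ii) directly: from the sequence $(u_h)$ of minimizers of $(J_h)$, equicoercivity extracts a subsequence, not relabeled, converging weakly-$*$ in $U$ to some $u\in U$, and $\Gamma$-convergence identifies $u$ as a minimizer of $\mathrm{sc}^- J$.

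The main delicacy I expect is the indicator function $I_C$ inside $J$, which is lower semicontinuous but not continuous in $T$, so the continuity hypothesis of Proposition~\ref{KzmpW7} is not literally met on all of $U$. The saving feature is that the maxout approximants $u_h$ built in Proposition~\ref{P8bAkX} are constructed from tangent polyaffine minorants and satisfy $u_h\le u$ pointwise with uniform convergence, so in the $\limsup$ step of Proposition~\ref{KzmpW7} one can confine the recovery sequence to elements comparable in energy to $u$; this, together with Lemma~\ref{lemmaUC0} and the strong closedness of $C$, lets the argument go through on $\{J<+\infty\}$, which is all that is needed since outside this set the $\limsup$ inequality is vacuous.
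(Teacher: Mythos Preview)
Your overall route---combine the density of Proposition~\ref{P8bAkX} with Proposition~\ref{KzmpW7} and then invoke Theorem~\ref{HV42Sq}(ii)---is precisely what the paper indicates; the paper offers no argument beyond that one sentence. One small slip: adding a constant $c$ to $u_h$ shifts $J_h(u_h)$ by $c\,|\Omega|$, so $u_h-u_h(0)$ is \emph{not} a minimizer of $J_h$ and may fall outside $C$ or violate the nonnegativity built into $U$. No normalization is needed anyway, since once $u_h\in C$ with $J_h(u_h)$ bounded, the two-sided estimate $E_0(g)\le E(u_h;g)\le E_0(g)+J_h(u_h)$ at any fixed $g\in M$, combined with the uniform Lipschitz bound on $U$, already controls $u_h(0)$.

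More substantively, you are right to flag that $J$ is only lower semicontinuous in $(U,T)$ because of $I_C$, so the continuity hypothesis of Proposition~\ref{KzmpW7} is not literally met. But your patch goes the wrong way. The tangent construction of Proposition~\ref{P8bAkX} gives $u_h\le u$ pointwise, hence $E(u_h;g)\le E(u;g)$, which is the \emph{opposite} of the inequality needed to place $u_h$ in $C$; strong closedness of $C$ (rather than openness) does not help here. The correct repair is to shift upward: set $\tilde u_h:=u_h+\|u-u_h\|_\infty$. Then $\tilde u_h\ge u$ pointwise, so $u\in C$ implies $E(\tilde u_h;g)\ge E(u;g)\ge E_0(g)$ and hence $\tilde u_h\in C$; adding a positive constant keeps $\tilde u_h\in U_h$ (polyaffinity, the Lipschitz bound and nonnegativity are all preserved); $\tilde u_h\to u$ uniformly; and $J(\tilde u_h)\to J(u)$ by Lemma~\ref{lemmaUC0}. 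With this recovery sequence the $\limsup$ step of Proposition~\ref{KzmpW7} goes through on $\{J<+\infty\}$, exactly as you intended.
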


\section{Examples} \label{Jz8Rgr}

Problem (\ref{SC3dBH}) has the structure of a {\sl minimax problem} 
\cite{Ekeland:1999}, also known as a Chebyshev approximation problem. It can 
be equivalently expressed as the nonlinear optimization program 
\begin{subequations} \label{g9a6Ch}
\begin{align}
    &
    \operatorname{\min} z ,
    \\ &
    z - I(u;g) \geq 0 , \quad g \in M ,
\end{align}
\end{subequations} 
with $I(u;g)$ as in (\ref{VgsvM1}), which makes contact with the vast body of 
work on nonlinear optimization. The connection to maxout networks, 
Remark~\ref{7mp7wM}, further enables the application of powerful tools from 
machine learning to the identification of general three-dimensional material 
laws. However, a detailed consideration of matters of numerical 
implementation is beyond the scope of this paper (the interested reader is 
referred to \cite{Conti:2024}). Instead, in this section we present simple 
examples that help visualize the tradeoffs and approximation strategies set 
forth in the foregoing. 

\subsection{One bar} \label{4EAuzh}

Consider a bar as in Example~\ref{UMGK5s}. In this case, the approximation 
spaces (\ref{2F7dmS}) reduce to 
\begin{equation} \label{8pgSV5}
    U_N 
    = 
    \{ 
        u = \bigvee_{i=1}^{N} f_i ,\;
        f_i = a_i \xi + b_i,\; |a_i| \leq C_1, \; i=1,\dots,N
    \} .
\end{equation}
We seek minimizers of the restricted cost functional $J_N(u)$, 
eq.~(\ref{5MgEnG}). Typical test functions $u_N \in U_N$ are shown in 
Fig.~\ref{c9agZ3}a. 

\begin{figure}[ht!]
\begin{center}
    \subfigure[]{\includegraphics[width=0.49\textwidth]{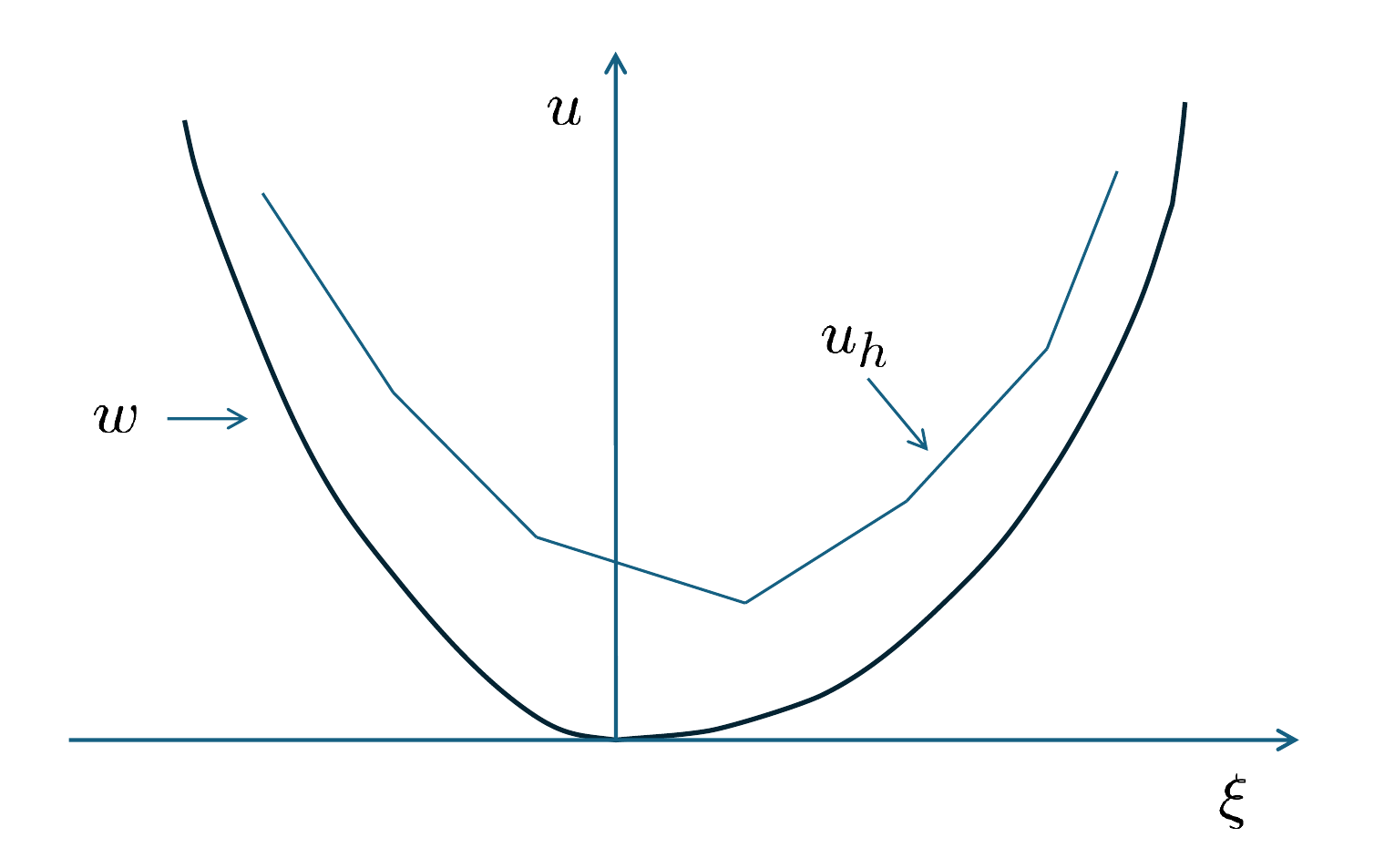}}
    \subfigure[]{\includegraphics[width=0.49\textwidth]{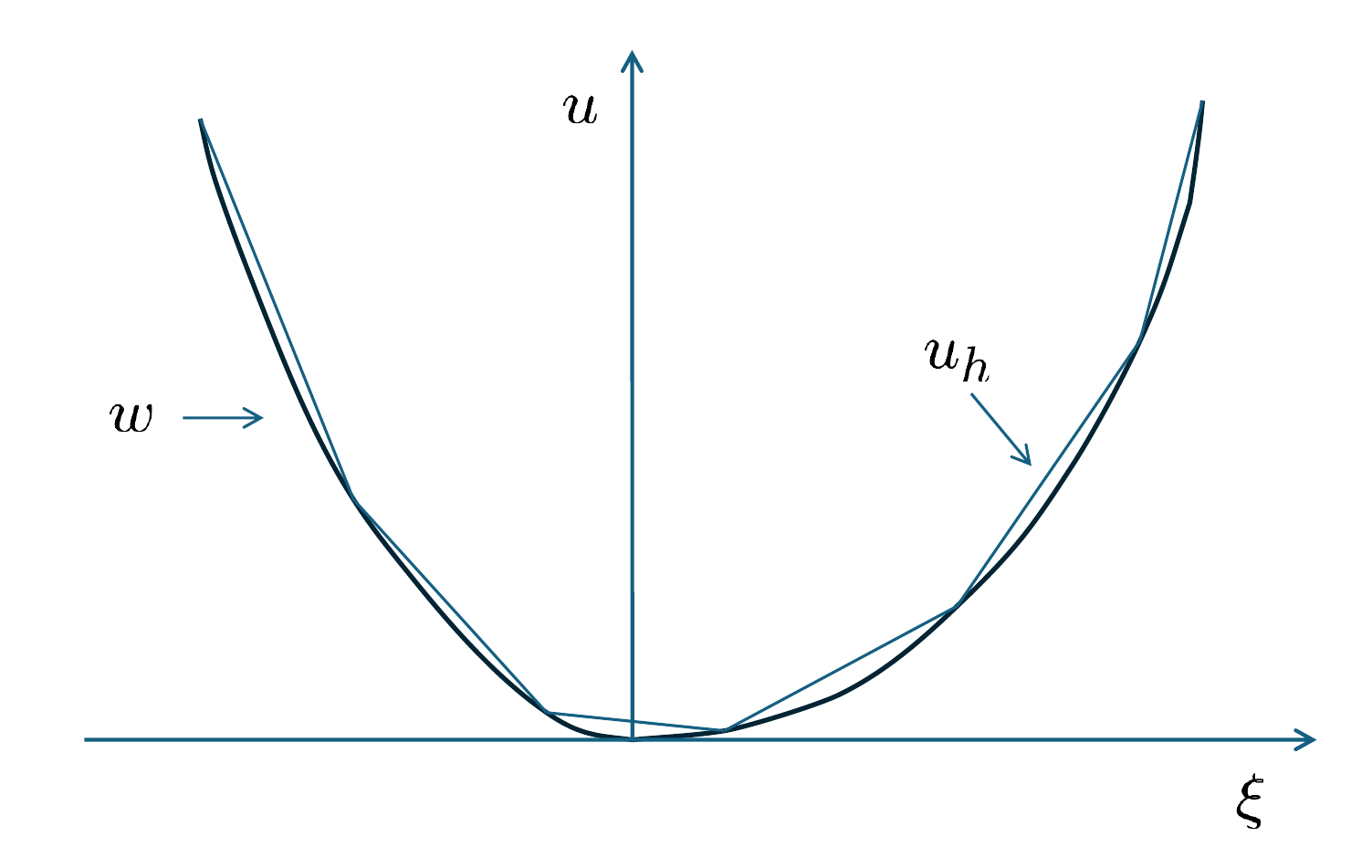}}
    \caption{One-dimensional bar. a) Approximating piece\-wise-affine 
    functions. b) Extremal approximating piece\-wise-affine 
    functions.} \label{c9agZ3} 
\end{center}
\end{figure}

\subsubsection{Linear elasticity} \label{CCKg3B}

As a particular example, suppose that the unknown energy density is 
\begin{equation} \label{9T7KFa}
    w(\xi) = \frac{\mathbb{C}}{2} \epsilon^2 ,
    \quad
    \xi = 1 + \epsilon ,
\end{equation}
where $\mathbb{C}$ is the elastic modulus and $\epsilon$ is the strain. We 
additionally suppose that the bar is tested over a set of prescribed 
displacements $\delta$ in some interval $M = [\delta_{\rm min}, \delta_{\rm 
max}]$. For fixed $\delta \in M$, the minimizing strains are uniform over the 
length of the bar and the minimum energy follows as 
\begin{equation} \label{JNTq3u}
    E_0(\delta)
    =
    A L \frac{\mathbb{C}}{2} (\delta/L)^2 ,
\end{equation}
where $A$ is the cross-sectional area and $L$ is the length of the bar. 

We consider trial energy densities of the form 
\begin{equation} \label{WF5HvP}
    u_N(\epsilon)
    =
    \max_{1\leq i \leq N} \{a_i + b_i \epsilon\} ,
\end{equation}
adapted from (\ref{8pgSV5}) to the linearized kinematics under consideration. 
The corresponding minimum energy of the bar is, then, 
\begin{equation}
    E(u_N;\delta)
    =
    A L \max_{1\leq i \leq N} \{a_i + b_i \delta/L\} .
\end{equation}
In keeping with (\ref{VgsvM1}), we restrict the trial densities so as to 
satisfy the bound 
\begin{equation} \label{L22Sds}
    E_0(\delta) \leq E(u_N;\delta), \quad \forall \delta \in M.
\end{equation}
The cost function (\ref{ptMeIB}) then follows as 
\begin{equation} \label{n3tHvK}
\begin{split}
    &
    J(u_N)
    = 
    \max_{\delta \in M}
    \min_{1\leq i \leq N} 
    \{E_0(\delta) - A L ( a_i + b_i \delta/L)\}) ,
\end{split}
\end{equation}
to be minimized with respect to $(a_i,b_i)_{i=1}^N$ subject to the 
constraint (\ref{L22Sds}). 

Evidently, by convexity the minimizing test functions rest on the graph of 
$w(\xi)$, Fig.~\ref{c9agZ3}b, on an ordered point set $(\delta_0, \dots, 
\delta_N)$, with $\delta_0 = \delta_{\rm min}$ and $\delta_N = \delta_{\rm 
max}$. In every interval $(\delta_i,\delta_{i+1})$, we have 
\begin{equation}
    \max\{ E(u_N;\delta) - E_0(\delta) \,:\, 
    \delta \in (\delta_i,\delta_{i+1}) \} 
    = 
    \frac{\mathbb{C} A}{8 L} (\delta_{i+1}-\delta_i)^2 ,
\end{equation}
and, maximizing over all intervals, we obtain 
\begin{equation} \label{7jWkek}
    J(u_N)
    = 
    \max_{0\leq i \leq N-1}
    \frac{\mathbb{C} A}{8 L} (\delta_{i+1}-\delta_i)^2 .
\end{equation}

The cost function (\ref{7jWkek}) is now to be minimized with respect to the 
point set $(\delta_i)_{i=1}^{N-1}$, which defines a {\sl minimax} problem. 
Alternatively, we can reformulate this problem as the ordinary convex program 
\cite[\S28]{Rockafellar:1970} 
\begin{subequations} \label{7v2Qdv}
\begin{align}
    &
    \min z =: f_0(x),
    \\ & \label{uFh672}
    f_i(x) 
    := 
    \frac{\mathbb{C} A}{8 L} (\delta_i-\delta_{i-1})^2 - z \leq 0 ,
    \quad i = 1,\dots,N ,
    \\ & \label{QG9MKb}
    (\delta_1,\dots,\delta_{N-1})
    \in
    C := \{\delta_{\rm min} \leq \delta_1 \leq 
    \cdots 
    \leq \delta_{N-1} \leq \delta_{\rm max} \},
\end{align}
\end{subequations}
where we write $x:=(z,\delta_1,\dots,\delta_{N-1})$. Define the Lagrangian, 
\begin{equation} \label{gfE5bq}
    L(x,\lambda)
    =
    \left\{
    \begin{array}{ll}
        f_0(x) + \lambda_1 f_1(x) + \cdots + \lambda_N f_N(x) ,
        & \text{if} \;\; \lambda \in E, \; x \in C , \\
        -\infty, & \text{if} \;\; \lambda \not\in E, \; x \in C , \\
        +\infty, & \text{if} \;\; x \not\in C ,
    \end{array}
    \right.
\end{equation}
where 
\begin{equation} \label{pfy5VB}
    E 
    = 
    \{ 
        \lambda_i \geq 0, \; i = 1, \dots, N 
    \} .
\end{equation}
By the Kuhn-Tucker theorem \cite[Corollary 28.3.1]{Rockafellar:1970}, the 
solutions of the convex program satisfy the conditions 
\begin{subequations} \label{D7taZJ}
\begin{align}
    &   \label{gfE5bqb}
    \lambda_i \geq 0 ;\;
    f_i({x}) \leq 0 ;\;
    \lambda_i f_i({x}) = 0, \;\; i = 1, \dots, N,
    \\ & \label{yrg5Xu}
    \partial f_0({x}) 
    + 
    \lambda_1 \partial f_1({x}) 
    + \dots + 
    \lambda_N \partial f_N ({x})
    =
    0 .
\end{align}
\end{subequations}
By simple inspection, we find that these conditions are satisfied by 
\begin{equation} 
    \delta_i^* - \delta_{i-1}
    =
    \frac{\delta_{\rm max}-\delta_{\rm min}}{N} ,
    \quad
    \lambda_i
    =
    \frac{1}{N} ,
    \quad
    i = 1,\dots,N .
\end{equation}
We note that $\lambda_i > 0$, whence the Kuhn-Tucker conditions 
(\ref{D7taZJ}) require 
\begin{equation} \label{e8D4Dz}
    f_1(x) = \dots = f_N(x) = 0,
\end{equation}
corresponding to an equi-distribution of the error. 

We thus conclude that the optimal point set $(\delta_i^*)_{i=0}^N$ 
corresponds to a uniform partition of $M$. The optimal prescribed 
displacements $(\delta_i^*)_{i=0}^N$ may be regarded as defining a program of 
{\sl optimal tests}. It bears emphasis that a finite number of such tests is 
required at every level of approximation. Solutions for two and five neurons 
are depicted in Fig.~\ref{MWWnL8}, by way of illustration. 

\begin{figure}[ht!]
\begin{center}
    \subfigure[]{\includegraphics[width=0.49\textwidth]{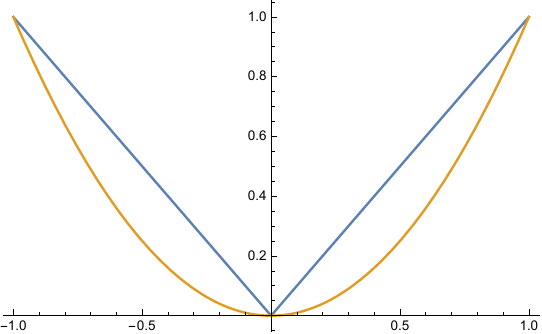}}
    \subfigure[]{\includegraphics[width=0.49\textwidth]{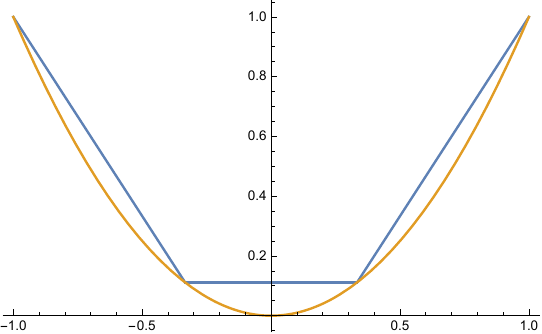}}
    \subfigure[]{\includegraphics[width=0.49\textwidth]{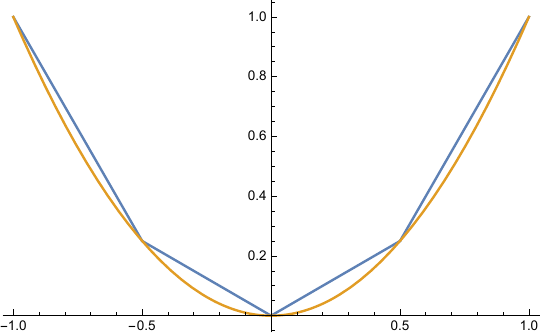}}
    \subfigure[]{\includegraphics[width=0.49\textwidth]{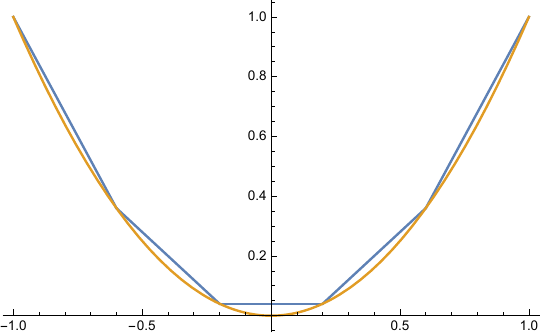}}
    \caption{One-dimensional bar, linearized kinematics, eq.~(\ref{9T7KFa}). 
    Optimal successive approximations generated by the tangent construction. 
    a) Two-neuron approximation; b) Three-neuron approximation; 
    c) Four-neuron approximation; d) Five-neuron approximation.} 
    \label{MWWnL8} 
\end{center}
\end{figure}

\subsubsection{Finite elasticity} \label{Ccvcy9}

We wish to identify a material which, unbeknownst to the tester, is 
incompressible Neo-Hookean, with energy density 
\begin{equation}\label{VZNwbO}
    w(\xi_1, \xi_2, \xi_3) 
    = 
    \left\{
    \begin{array}{ll}
        \dfrac{\mu}{2} \big( \xi_1^2 + \xi_2^2 + \xi_3^2 - 3 \big) ,
        & \text{if} \;\; \xi_1 \xi_2 \xi_3 = 1, \\ [0.25truecm]
        +\infty , & \text{otherwise} ,
    \end{array}
    \right.
\end{equation}
where $(\xi_1, \xi_2, \xi_3)$ are the principal stretches and $\mu > 0$ is 
the shear modulus. Suppose that the specimen takes the form of a round bar, 
$0 < x < L$, of constant circular cross section of area $A$. In order to 
simplify the problem, we resort to one-dimensional rod theory
\begin{subequations}\label{n3GjE4}
\begin{align}
    &
    G(u,y; g)
    = 
    \int_0^L A \, u(y'(x)) \, dx ,
    \qquad
    y(0) = 0 , \quad y(L) = g ,
\end{align}
\end{subequations}
where $u$ is a trial energy density and $y:(0,L)\to\mathbb{R}$ is a trial 
axial deformation mapping,
and focus on identifying the reduced energy density 
\begin{equation}\label{HnSZSa}
    w(\xi)
    =
    \frac{\mu}{2} \Big( \frac{2}{\xi} + \xi^2 - 3 \Big) .
\end{equation}
The bar is tested over a set of prescribed end 
displacements $g$ in some interval $M = (g_{\rm min}, g_{\rm max}) \subset 
(0,+\infty)$. For fixed $g \in M$, the minimizing deformations are uniform 
over the length of the bar and the minimum energy follows as 
\begin{equation} \label{Z67v8f}
    E_0(g)
    =
    A L \frac{\mu}{2} 
    \Big( \frac{2 L}{g} + \frac{g^2}{L^2} - 3 \Big) .
\end{equation}

Proceeding as in the linear-elastic case, with 
\begin{equation} \label{wseFH9}
    u_N(\xi)
    =
    \max_{1\leq i \leq N} \{a_i + b_i \xi\} ,
\end{equation}
we arrive at an ordinary convex program of the form (\ref{7v2Qdv}) with 
\begin{equation}
    f_i(x) := \max \{ E(u_N,g) - E_0(g) \,:\, g \in (g_{i-1},g_i) \},
    \quad
    i = 1,\dots,N ,
\end{equation}
and $x := (z,g_1,\dots,g_N)$. The corresponding Kuhn-Tucker conditions are 
again of the form (\ref{e8D4Dz}), which determines the optimal testing 
program $(g_i^*)_{i=1}^N$. Again we observe that only a finite number of 
tests is required at every level of approximation. Optimal solutions for two 
to five neurons are depicted in Fig.~\ref{4zwMyX}, by way of illustration. 

\begin{figure}[ht!]
\begin{center}
    \subfigure[]{\includegraphics[width=0.49\textwidth]{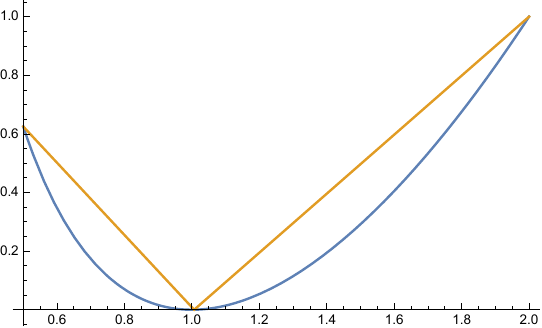}}
    \subfigure[]{\includegraphics[width=0.49\textwidth]{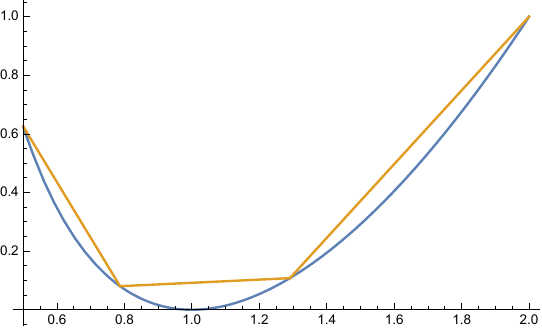}}
    \subfigure[]{\includegraphics[width=0.49\textwidth]{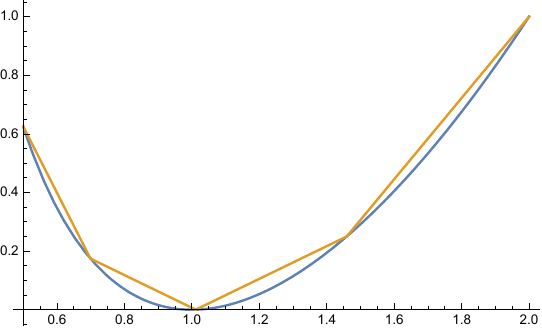}}
    \subfigure[]{\includegraphics[width=0.49\textwidth]{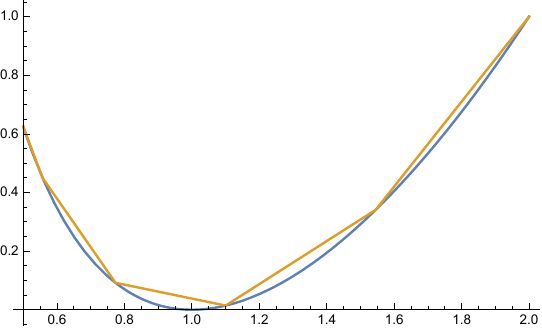}}
    \caption{One-dimensional bar, finite kinematics, eq.~(\ref{HnSZSa}), 
    successive optimal approximations. a) Two-neuron approximation; b) 
    Three-neuron approximation; c) Four-neuron approximation; d) Five-neuron 
    approximation.} 
    \label{4zwMyX} 
\end{center}
\end{figure}

\subsection{Two bars in parallel}

The preceding one-bar examples illustrate the minimax character of the 
identification problem using maxout neural-network approximation. However, 
they are special in that the state of the system is characterized by one 
single deformation that is in one-to-one correspondence to the prescribed 
displacements. The two-bar Example~\ref{9AqoKC} provides a simple 
illustration of a system in which a number of deformations is induced for 
every applied displacement and the measured total energy conflates the 
corresponding local energy densities. 

We assume that the underlying material obeys the one-dimensional Neo-Hookean 
law (\ref{HnSZSa}). For fixed prescribed displacement $\delta \in M$, the 
minimizing deformations are uniform over the length of each bar and the 
minimum energy follows as 
\begin{equation} 
    E_0({ \delta})
    =
    \sum_{k=1}^2
    A_k L_k \frac{\mu}{2} 
    \Big( \frac{2}{\xi_k} + \xi_k^2 - 3 \Big) ,
    \quad
    \xi_k := 1 + \delta/L_k.
\end{equation}
Likewise, the minimum energy for a trial energy density of the form 
(\ref{wseFH9}) evaluates to 
\begin{equation} 
    E(u_{N};{ \delta})
    =
    \sum_{k=1}^2
    A_k L_k \max_{1\leq i\leq N} (a_i+b_i \xi_k) ,
\end{equation}
which is convex and piecewise linear consisting of possibly up to $2N$ 
segments. The optimal approximations follow from a ordinary convex program 
similar to those analyzed in the preceding sections, which can be solved 
directly for the unknowns $(a_i,b_i)_{i=1}^N$. 

More directly, a minimizing sequence can be constructed directly from the 
one-bar solutions of Section~\ref{Ccvcy9}. To this end, let $U_N$ be defined 
according to (\ref{2F7dmS}) and let 
\begin{equation} \label{CvyZM9}
    u_N^*(\xi)
    =
    \max_{1\leq i \leq N} \{a_i^* + b_i^* \xi\} ,
    \quad
    \xi = \frac{1+\delta}{\min(L_1,L_2)} ,
    \quad \delta \in M ,
\end{equation}
be locally optimal in the sense 
\begin{equation} \label{pA2ANS}
    \max_{\xi \in K}
    \Big(
        w(\xi) 
        - 
        u_N^*(\xi)
    \Big)
    \leq 
    \max_{\xi \in K}
    \Big(
        w(\xi) 
        - 
        u_N(\xi)
    \Big) ,
    \quad
    w \leq u_N \in U_N .
\end{equation}
We claim that $u_N^*$ is a minimizing sequence for the identification problem. 

To prove this claim, we begin by noting that, if $u \geq w$, then 
\begin{equation} \label{dF7LHY}
    E(u;{ \delta})
    =
    \sum_{k=1}^2 
        A_k L_k  u(\xi_k)
    \geq
    \sum_{k=1}^2 
        A_k L_k w(\xi_k)
    =
    E_0({ \delta}) ,
\end{equation}
for all {${ \delta} \in M$}, and $u \in C$. 

Next, we construct a test function $f_N$ such that $J(f_N)\to0$. To do this,
subdivide $K=[k_{\rm min},k_{\rm max}]$ into $N$ intervals of equal
length, with $\xi_0=k_{\rm min}<\xi_1<\dots\xi_N=k_{\rm max}$ denoting the 
nodes. Let $f_N:K\to\R$ be a piecewise affine function that coincides with 
$w$ on those nodes. Since $w$ is convex, $w\le f_N$ on $K$. Furthermore, the 
Lipschitz constant of $f_N$ is bounded by the Lipschitz constant of $w$, and 
the maximum of $f_N$ over $K$ is bounded by the maximum of $w$ over $K$. 
Therefore, $f_N\in U$, and, since it is piecewise affine, $f_N\in U_N$. We 
have the bound 
\begin{equation} \label{4CfYC3}
    \|f_N-w\|_{L^\infty(K)}
    \le 
    \Lip(w) \, \frac{k_{\rm max}-k_{\rm min}}N ,
\end{equation} 
and, if $w\in C^2(K)$, we further have 
\begin{equation}
    \|f_N-w\|_{L^\infty(K)}
    \le 
    \|w\|_{C^2(K)} \, \frac{(k_{\rm max}-k_{\rm min})^2}{N^2} .
\end{equation}
Recall from Lemma \ref{lemmaUC0} that 
\begin{equation} \label{jy2aCW}
    J(f_N) \le J(w) + 
    (A_1 L_1 + A_2 L_2) \,
    \|f_N-w\|_{L^\infty(K)} ,
\end{equation}
which is a special case of (\ref{esEuu1}). Since $J(w)=0$, (\ref{4CfYC3}) and 
(\ref{jy2aCW}) then jointly imply that $J(f_N)\to 0$ as $N\to\infty$. 

But
\begin{equation} \label{kYv2Bk}
\begin{split}
    & J(f_N) \ge 
    \min \{ J(u_{{N}}) : u_N\in U_N, u_N\ge w\}
    = \\ &
    \min_{u_N \geq w}
    \max_{\delta \in M}
    \Big\{
        \sum_{k=1}^2 A_k L_k 
        \Big(
            \max_{1\leq i\leq N} (a_i+b_i \xi_k)
-             w(\xi_k) 
        \Big)
    \Big\}
    \ge \\ & 
    \max_{\delta \in M}
    \min_{u_N \geq w}
    \Big\{
        \sum_{k=1}^2 A_k L_k 
        \Big(
            \max_{1\leq i\leq N} (a_i+b_i \xi_k)
            - 
            w(\xi_k)
        \Big)
    \Big\}
    \geq \\ & 
    \max_{\delta \in M}
    \Big\{
        \sum_{k=1}^2 A_k L_k 
        \min_{u_N \geq w}
        \Big(
            \max_{1\leq i\leq N} (a_i+b_i \xi_k)
            - 
            w(\xi_k) 
        \Big)
    \Big\}
    \geq \\ & 
    \max_{\delta \in M}
    \Big\{
        \sum_{k=1}^2 A_k L_k 
        \Big(
            \max_{1\leq i\leq N} (a^*_i+b^*_i \xi_k)
            - 
            w(\xi_k) 
        \Big)
    \Big\}
    =
    J(u_N^*) , 
\end{split}
\end{equation}
which shows also $J(u_N^*)$ converges to zero and $u_N^*$ is a minimizing 
sequence for the optimal identification problem, as advertised.  

The global energy functions $E(u_N^*;{ \delta})$ for a two-bar structure 
resulting from the optimal solutions shown in Fig.~\ref{4zwMyX} are collected 
in Fig.~\ref{6gDRmF} by way of illustration. 

\begin{figure}[ht!]
\begin{center}
    \subfigure[]{\includegraphics[width=0.49\textwidth]{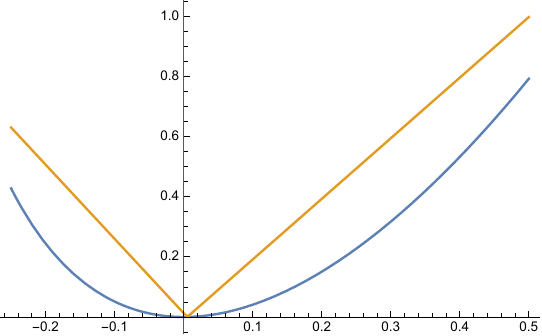}}
    \subfigure[]{\includegraphics[width=0.49\textwidth]{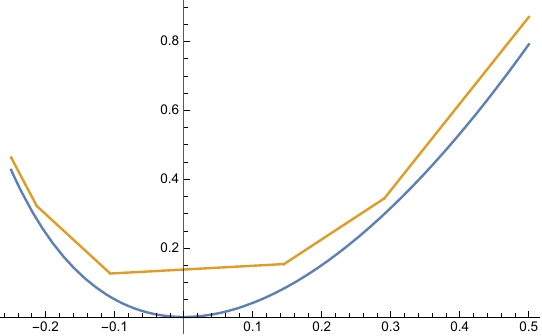}}
    \subfigure[]{\includegraphics[width=0.49\textwidth]{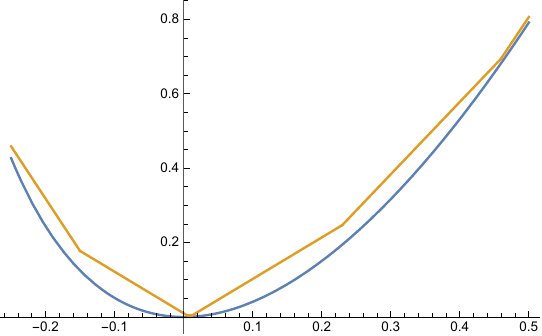}}
    \subfigure[]{\includegraphics[width=0.49\textwidth]{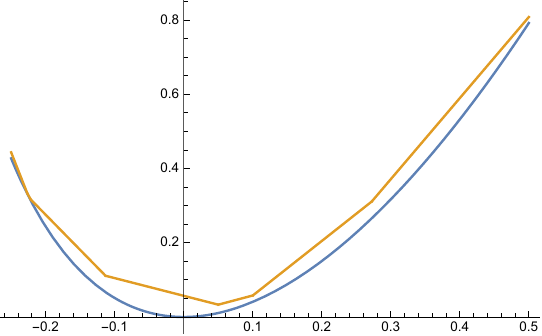}}
    \caption{One-dimensional bar, finite kinematics, eq.~(\ref{HnSZSa}). 
    Successive global energy functions $E(u_N^*;g)$ corresponding to the 
    minimizing sequence (\ref{CvyZM9}) and (\ref{pA2ANS}). 
    a) Two-neuron approximation; b) Three-neuron approximation; 
    c) Four-neuron approximation; d) Five-neuron approximation.} 
    \label{6gDRmF}
\end{center}
\end{figure}

\section{Summary and concluding remarks}

We have formulated the problem of material identification as a problem of 
optimal control in which the deformation of the specimen is the state 
variable and the unknown material law is the control variable. We assume that 
the material obeys finite elasticity and the deformation of the specimen is 
in static equilibrium with prescribed boundary displacements. We further 
assume that the attendant total energy of the specimen can be measured, 
e.~g., with the aid of the work-energy identity. In particular, no full-field 
measurements, such as DIC, are required. The cost function measures the 
maximum discrepancy between the total elastic energy corresponding to a trial 
material law and the measured total elastic energy over a range of prescribed 
boundary displacements. The question of material identifiability is thus 
reduced to the question of existence and uniqueness of controls. 

We propose a specific functional framework combining: bounded sets of 
Lipschitz-continuous displacements; bounded sets of Lipschitz-continuous 
energy densities; and a minimax cost function. We endow these spaces with 
uniform convergence topologies. Within this framework, we prove existence of 
optimal controls and show that the question of material identifiability 
hinges on the separating properties of the boundary data. In addition, we 
exploit the variational character of the framework to formulate provably 
convergent approximations based on restriction to dense sequences of trial 
energy densities.  

The proposed functional framework naturally suggests and supports 
approximation by {\sl maxout} neural networks, i.~e., neural networks of 
piecewise affine or polyaffine functions and a maximum, or {\sl join}, 
activation function. This connection is natural in view of the lattice 
structure of the space of energy densities, the natural representation of 
convex (resp.~polyconvex) functions as upper envelops of piecewise affine 
(resp.~polyaffine) functions, and the minimax structure of the identification 
problem. Indeed, we show that maxout neural networks have the requisite 
density property in the space of energy densities and therefore result in 
convergent approximations as the number of neurons increases to infinity. 

We conclude with the following remarks.

\begin{itemize}

\item[i)]\underline{Well-posedness and convergence.} The importance of 
    establishing the well-posedness, in the sense of existence of solutions 
    and convergence of approximations thereof, can hardly be 
    overemphasized. The ill-posedness of the cost function and numerical 
    schemes is often masked in finite-dimensional calculations built on 
    discrete models, e.~g., finite elements and neural networks, when 
    working at a fixed level of discretization without assessing the 
    convergence properties of the scheme. Under those conditions, any 
    arbitrary continuous cost function minimized over a bounded set of 
    trial energy densities will return a solution by the Weierstrass 
    extreme value theorem. However, there is no guarantee in general that 
    such numerical solutions converge in any reasonable sense to the 
    underlying material law as the number of degrees of freedom is 
    increased. 
     
\item[ii)]\underline{Neural networks representations.} Neural-network 
    representations are widely used in applications, but often the topology 
    and structure of the network is {\sl ad hoc} or tuned by trial and 
    error, without a clear sense of approximation and optimality. As 
    already noted, in the present framework the particular class of maxout 
    neural networks is suggested naturally by the variational framework of 
    the problem and the lattice structure of the space of energy densities, 
    and is found to result in convergent approximations. Conveniently, 
    maxout neural networks partake of said lattice structure and are 
    therefore 'flat', in the sense that all layerings, or 
    topologies, of the network are equivalent. This property eliminates the 
    need to optimize the network topology, which is a difficult task for 
    general neural network representations. 

\item[iii)]\underline{Numerical implementation.} The connection with neural 
    networks opens avenues for the efficient numerical implementation of 
    the material identification approach. The overriding advantage of 
    maxout neural networks is that they can be {\sl trained} efficiently 
    \cite{Goodfellow:2013, Strang:2019}. Conveniently, due to the 
    piecewise-constant form of the trial energy densities in the maxout 
    neural network representation, the identification problem reduces to a 
    linear programming form, which, upon spatial discretization, can be 
    solved by means of linear programming solvers. The interested reader is 
    referred to \cite{Conti:2024} for further details.  

\item[iv)]\underline{High-throughput experiments.} Traditional mechanical 
    testing is for the most part based on experimental designs that ensure 
    the direct inference of stresses and strains from measurements, e.~g., 
    by endeavoring to induce states of constant stress and strain, or 
    simple wave patterns, or by some other means. Because of the paucity of 
    material state information contained in these states, appropriate 
    coverage and characterization of material behavior requires an 
    extensive suite of tests to be performed. Recent work \cite{Jin:2022}, 
    suggests that high-throughput experiments can be designed by inducing 
    highly-inhomogeneous--possibly singular--deformations in the specimen, 
    thus achieving extensive coverage with a single test. The present 
    variational framework provides an avenue for interpreting and 
    optimizing the design of such high-throughput experiments. 
    Specifically, the optimal testing program is set forth by the smallest 
    set $M$ of prescribed boundary displacements that ensures 
    identifiability of the material law, i.~e., the smallest {\sl 
    separating} set $M$ in the sense of Definition~\ref{Lawj4j}. We note 
    that, when the approximating spaces $U_h$ are finite-dimensional, 
    separation requires consideration of a finite applied-displacement set 
    $M_h$, as illustrated by the examples in Section~\ref{Jz8Rgr}.  

\end{itemize}

\section*{Acknowledgements} 

This work was funded by the Deutsche Forschungsgemeinschaft (DFG, German 
Research Foundation) {\sl via} project 211504053 - SFB 1060; project 
441211072 - SPP 2256; and project 390685813 -  GZ 2047/1 - HCM.

\end{document}